\theoremstyle{plain}
\newtheorem{theorem}{Theorem}[section]
\newtheorem{corollary}[theorem]{Corollary}
\newtheorem{prop}[theorem]{Proposition}
\newtheorem{assumption}[theorem]{Assumption}
\theoremstyle{definition}
\newtheorem{definition}[theorem]{Definition}
\theoremstyle{remark}
\newtheorem{remark}[theorem]{Remark}
\newtheorem{example}[theorem]{Example}
\newcommand\xqed[1]{%
	\leavevmode\unskip\penalty9999 \hbox{}\nobreak\hfill
	\quad\hbox{#1}}
\newcommand\defEnd[0]{\xqed{$\diamond$}}
\titleformat{\section}[runin]{\normalfont\bfseries}{\thesection .}{0.5em}{}
\titleformat{\subsection}[runin]{\normalfont\bfseries}{\thesubsection .}{0.5em}{}
\DeclareMathOperator*{\argmax}{argmax}
\DeclareMathOperator*{\med}{med}
\begin{document}

	\title{
		Markovian randomized equilibria for general Markovian Dynkin games in discrete time 
	}
	
	\author{S\"oren Christensen\thanks{Kiel University, Department of Mathematics, \emph{christensen@math.uni-kiel.de.}}\hspace{1cm} Kristoffer Lindensjö\thanks{Stockholm University, Department of Mathematics, \emph{kristoffer.lindensjo@math.su.se}}\;\; \\
		Berenice Anne Neumann\thanks{Trier University, Department IV - Mathematics, \emph{neumannb@uni-trier.de}}}
	\maketitle

	\allowdisplaybreaks

	\begin{abstract}
		We study a general formulation of the classical two-player Dynkin game in a discrete time Markovian setting. 
        We identify an appropriate class of mixed strategies -- \textit{Markovian randomized stopping times} -- in which players stop at any given state with a state-dependent probability.
        One main result is an explicit characterization of Wald-Bellman-type for Nash equilibria based on this notion of randomization. 
        In particular, we derive a novel characterization of randomized equilibria in zero-sum Dynkin games, which we use to (i) establish the existence and explicit construction of Markovian randomized equilibria, (ii) provide necessary and sufficient conditions for the non-existence of pure strategy equilibria, and (iii) construct an example that admits a unique randomized equilibrium but no pure one.
		We also provide existence and characterization results in the symmetric version of our game. 
		Finally, we establish existence of a characterizable equilibrium in Markovian randomized stopping times for the general game formulation under the assumption that the state space is countable. \\

		\noindent\textbf{Keywords:} Dynkin games, Nash equilibrium, Markovian randomized stopping strategies, 
		subgame perfect equilibrium \\
		
		\noindent\textbf{AMS Subject Classifications:} 91A55, 60G40,  91A15

	\end{abstract}

\section{Introduction}\label{sec:intro}
Starting with the seminal paper \cite{DynkinStoppingGame}, Dynkin games have been studied extensively. 
In the present paper we restrict our attention to Dynkin games for Markov processes in discrete time. Dynkin games in continuous time have been studied in, 
e.g., \cite{attard2018nonzero,BensoussanFriedman1977Nonzero,deanglis2018nash,hamadene2010continuous,laraki2005value,riedel2017subgame}, and for an overview we refer the reader to \cite{KiferSurvey}. 
	
In the general discrete time Dynkin game formulation the rewards of the two players $i=1,2$ read, for a given stopping time pair $(\tau_1, \tau_2)$, as 
\begin{align}\label{general-rewards}
\mathbb{E} \left[ F_{\tau_i}^i \mathbb{I}_{\{\tau_i < \tau_j\}} + G_{\tau_j}^i \mathbb{I}_{\{\tau_j < \tau_i\}} + H_{\tau_i}^i \mathbb{I}_{\{\tau_i = \tau_j\}} \right]
\end{align}
for $i,j =1,2$ with $j \neq i$, where $F^i, G^i,H^i, i=1,2,$ are integrable discrete time processes (with a suitable interpretation of $H_n^i$ for $n=\infty$). 
Here, 
	$F^i$ corresponds to the payoff for player $i$ when player $i$ stops first, 
	$G^i$ corresponds to the payoff for player $i$ when the other player $j \neq i$ stops first, and 
	$H^i$ corresponds to the payoff for player $i$ when both players stop at the same time. 
	
A pair of stopping times is for this game said to be a Nash equilibrium if it satisfies the usual assumption of sub-optimal deviation for each of the two players; 
in particular, $(\tau_1,\tau_2)$ is a Nash equilibrium if 
$\tau_i$ maximizes \eqref{general-rewards} over admissible stopping times assuming that $\tau_j, j \neq i$ is fixed, for $i=1,2$. 
Similarly, we say, for a fixed $\epsilon>0$, that we have an $\epsilon$-equilibrium if there exists a pair $(\tau_1,\tau_2)$ such that $\tau_i$ achieves the supremum up to $\epsilon$ in \eqref{general-rewards}, assuming that $\tau_j, i \neq j$ is fixed, for $i=1,2$.

The most studied class of Dynkin games are zero-sum games, i.e., games where one player's gain is the other player's loss. 
Mathematically, this means $F^1 = - G^2$, $H^1 =  - H^2$ and $G^1 = - F^2$. If a game does not satisfy this we call it a non-zero-sum game.
	
Dynkin games are well-understood whenever the payoffs are ordered such that $F^1 \le H^1 \le G^1$ and $F^2 \le H^2 \le G^2$, both in the zero-sum as well as in the non-zero-sum formulation:  
in this setting it is  sufficient to consider pure strategies (i.e., stopping without randomization). 
For the zero-sum version Neveu \cite{NeveuMartingales} proved  existence of $\epsilon$-equilibria, for every $\epsilon>0$, in pure strategies and provided a characterization. 
Thereafter, existence and characterization of a Nash equilibrium for the zero-sum game (again in pure strategies) has been established by Ohtsubo in \cite{OhtsuboTerminating}. 
In the non-zero-sum case Ohtsubo \cite{OhtsuboNonzeroSum} provides verification results and constructs explicit (pure-strategy) Nash equilibria for finite time horizons and establishes the existence of a Nash equilibrium for a Markovian game under the condition that $G^1$ is a supermartingale. In \cite{OhtsuboNonZeroMonotone} Ohtsubo explicitly constructs (pure-strategy) Nash equilibria for a monotone problem.   
Also non-constructive existence results for pure-strategy Nash equilibria have been established. Namely,  in \cite{MorimotoNonZero} the existence of Nash equilibria for non-zero-sum games under the assumption that $F^i$ is a submartingale and $G^i$ is a supermartingale is established and in \cite{HamadeneDiscrete} the existence of 
Nash equilibria for non-zero-sum $n$-player games is proved.

However, if we drop the assumption that $F^1 \le H^1 \le G^1$ and $F^2 \le H^2 \le G^2$, the situation becomes more involved. First of all, we now need to consider a much larger and more difficult class of strategies, namely randomized stopping strategies \cite{YasudaRandomized}. However, even for this larger strategy class it can be the case that no Nash equilibrium exists even in the case where an $\epsilon$-equilibrium, for every $\epsilon>0$, does exist, see \cite{Shmaya_Deterministic_Epsilon_Existence}.  
Nonetheless, the existence of $\epsilon$-equilibria, for every $\epsilon>0$, for general games can be established requiring only  integrability conditions. 
Indeed, in \cite{YasudaRandomized} the existence of a value for zero-sum games 
with finite time horizon or discounting is established. Here, as usual for zero-sum games, existence of a value means that the expected reward when first taking the supremum over $\tau_1$ and second taking the infimum $\tau_2$ is the same 
as when first considering infimum over $\tau_2$ and second the supremum over $\tau_1$.
Moreover, the existence of $\epsilon$-equilibria, for every $\epsilon>0$, has been established in \cite{RosenbergSolanVieilleZeroSum} for zero-sum games and in \cite{ShmayaSolanNonZeroExistence} for  non-zero-sum games. However, these results are non-constructive. Indeed, there are, according to the knowledge of the authors, no general results on the characterization of randomized equilibria of Dynkin games.

In developing the theory for ordinary stopping problems, one typically assumes an additional Markov structure, see, e.g., \cite{peskir2006optimal, shiryaev2007optimal}. This provides a general framework for solving problems concretely, since one can restrict the class of stopping times to state-dependent first entrance times.  Surprisingly, this view has never previously been systematically adopted in the treatment of Dynkin games. For discrete time Markovian Dynkin games (see Section~\ref{sec:Markovian-Dynkin} for the general formulation studied in the present paper), only specific formulations have, according to the knowledge of the authors, been considered: 
\cite{FridMarkovChain} considers a zero-sum game where depending on the state of the process only one player can stop at every time step, 
\cite{ElbakidzeMarkovGame} considers the zero-sum version with the additional assumption that $F^1 \le H^1 \le G^1$, 
\cite{DomanskyMarkovSpecialCase2, DomanskyMarkovSpecialCase} both consider a zero-sum game for a particular Markov chain, and 
\cite{FerensteinMarkovGame} considers a non-zero-sum $n$-player game for a particular class of transition rates and allows only Markovian strategies. 
In \textit{continuous time}, Markovian zero-sum games have been considered under the condition $F^1 \le H^1 \le G^1$. Here, too, it suffices to consider pure strategies, and in this case there are general existence results 
\cite{Ekstrom_Peskir_Markov}, as well as characterization results in terms of (quasi)-variational inequalities for diffusions \cite{Bensoussan_Friedman_1974,Friedman_1973} and super- and subharmonic functions \cite{Peskir_Semiharmonic}. Moreover, in \cite{deanglis2018nash} the existence of equilibria in (pure) threshold strategies has been established for continuous time, non-zero-sum games with underlying diffusion whenever $F^i \le H^i \le G^i$ as well as additional conditions hold.
In \cite{christensen2024general}, a complete equilibrium solution to a general formulation, i.e., without ordering conditions for the payoffs, of the zero-sum game with an underlying one-dimensional diffusion is presented.

\subsection{The Markovian Dynkin game and contributions}\label{sec:Markovian-Dynkin}
Let us introduce the general discrete time Markovian Dynkin games that we investigate in this paper: 
let $X=(X_n)_{n \in \mathbb{N}_0}$ be a homogeneous Markov process with a Markov kernel $\Pi$ on a probability space $\left(\Omega, {\cal F}, \mathbb{P}_{x}\right)$ with state space $E$. The associated expectations under $X_0=x \in E$ are denoted by $\mathbb{E}_{x}$. 
	In this setting we consider a general Dynkin game with two players $i=1,2,$ each choosing a stopping time $\tau_i$.  
	In particular, for a given stopping time pair $(\tau_1,\tau_2)$ the corresponding expected rewards of the players are 
	\begin{align*}
		J_i({x}; \tau_1,\tau_2): = \mathbb{E}_{x} 
		\left[ \alpha^{\tau_i} f_i(X_{\tau_i}) \mathbb{I}_{\{\tau_i < \tau_j\}} + \alpha^{\tau_j} g_i(X_{\tau_j}) \mathbb{I}_{\{\tau_j < \tau_i\}} + \alpha^{\tau_i} h_i(X_{\tau_i}) 
		\mathbb{I}_{\{\tau_i= \tau_j < \infty\}} \right]
	\end{align*}
	for $i,j=1,2$ with $j \neq i$, where $\alpha$ is a constant (discount factor)  satisfying $0 < \alpha < 1$ and $f_i,g_i,h_i: E \rightarrow \mathbb{R},i=1,2,$ are measurable functions; see Section~\ref{sec:model} for details regarding, e.g., the set of admissible stopping times and integrability assumptions, as well as the Nash equilibrium definition.

	The main contributions of the present paper are: 
	
	\begin{enumerate}
		
		\item Our investigations indicate that a general theory for discrete time Dynkin games for underlying Markov processes should be built on a very natural class of randomized stopping times -- which we refer to as \textit{Markovian randomized stopping times}. The interpretation of a Markovian randomized stopping time is that it corresponds to stopping at each date according to a probability that depends only on the value of the state process at that date; see Section~\ref{sec:model} for details. 
		Indeed, our investigations strongly indicate that Markovian randomized stopping times correspond  to the right type of randomized (also known as \textit{mixed}) strategy for discrete time stopping games, not only from an intuitive, but also from a mathematical standpoint. 
		In particular, by considering Markovian randomized stopping times we are able to explicitly characterize and construct equilibria, 
		as well as to prove equilibrium existence results (see the items below). We remark that we do not restrict the \textit{admissible} stopping times to be of Markovian randomized type, but instead allow a general class of stopping times; see Section~\ref{sec:model}.

		\item For the general game formulation we provide an equilibrium characterization, as well as a verification, result formulated in terms of a system of Wald-Bellman type equations, 
		for both randomized and pure equilibria. The main novelty is that this provides an \textit{explicit} equilibrium characterization encompassing also randomized equilibria. 
		
		\item  Relying on our equilibrium characterization we study two specifications of our game. 
		For the well-known zero-sum game, we  
		(i) establish the existence of a global Markovian equilibrium without any ordering condition and provide a corresponding explicit equilibrium construction,   
		(ii) obtain necessary and sufficient conditions for the non-existence of pure equilibria, and
		(iii) construct an explicit example with a unique randomized, but no pure equilibrium. 
		For the symmetric specification of our game -- meaning that the two players have identical payoff functions -- we establish that a (possibly randomized) \textit{symmetric} equilibrium exists under a certain condition on the payoff functions, and we provide a corresponding equilibrium construction in terms of an associated optimal stopping problem.

		\item In addition to the existence results for the zero-sum and symmetric games, we establish existence of an equilibrium in Markovian randomized stopping times for the general game formulation under the assumption that the state space $E$ is countable. 
		This stands in contrast to the known existence result in the literature in two ways. First of all, for games without discounting only existence of $\epsilon$-equilibria has been established and there are counter-examples showing that Nash equilibria do not exist even for deterministic and stationary rewards \cite{Shmaya_Deterministic_Epsilon_Existence}. We now establish the existence of a Nash equilibrium under the assumption of discounting in connection with a slightly stronger integrability condition (see Remark~\ref{rem:integrability}). Secondly, in our formulation the equilibrium can be characterized (see item 2. above).

	\end{enumerate}

	In Section~\ref{sec:model}, we specify the mathematical model and the game formulation. Further related literature is reviewed in Remarks~\ref{rem:literature-mixed-stopping} and \ref{rem:literature-mixed-stopping-cont}. 
	In Section~\ref{sec:Optimization} we study the best response mapping for our game. 
	Section~\ref{sec:ver-thm} provides the equilibrium characterization.  
	The zero-sum game is studied in Section~\ref{sec:zero-sum}. 
	The symmetric game is studied in Sections~\ref{sec:symmetric} and \ref{sec:countable-symNE}.  
	General equilibrium existence for countable state spaces is established in Section~\ref{sec:Existence}.

	\section{Randomized Markovian Stopping Times, Nash Equilibrium and Assumptions}\label{sec:model}
	
	Let us first define what we mean by Markovian randomized stopping times.
	
	\begin{definition}[Markovian randomized stopping times] 
		Let $(\xi_n^{(i)})_{n \in \mathbb{N}_0},i=1,2,$ be sequences of iid random variables with $\xi_n^{(i)} \sim U(0,1)$ which are independent of the state process $X$
		and supported by our probability space. A Markovian  randomized stopping time (for player $i=1,2$) is given by 
		\begin{align}\label{eq:M-rand-stop-time}
			\tau^{p^{(i)}} = \inf \{ n \in \mathbb{N}_0: p^{(i)}(X_n) \ge \xi^{(i)}_n\}
		\end{align} 
		where $p^{(i)}:E \rightarrow [0,1]$ is a deterministic measurable function. We use the standard convention that $\inf \emptyset = \infty$. \defEnd
	\end{definition}

	We identify a Markovian randomized stopping time \eqref{eq:M-rand-stop-time} with the associated function $p^{(i)}$ and will often, 
	for example, refer to $\left(p^{(1)},p^{(2)}\right)$ as a pair of stopping strategies.

	\begin{remark}[Markovian randomized stopping] \label{rem:literature-mixed-stopping}
		The interpretation of  a Markovian randomized stopping time is that 
		$\xi_n^{(i)}$ is a randomization device (corresponding to a coin flip) that player $i$ employs to randomize the stopping decision at time $n$;  
		where the choice of function $x \mapsto p^{(i)}(x)$ determines the probability (the bias of the coin) of stopping when visiting each individual state $x \in E$. 
		Hence, a Markovian randomized stopping time  corresponds essentially to randomizing by stopping at each date independently according to a probability that depends only on the value of the state process at that date. 
		We remark that this type of stopping time has previously been studied in the context of an  $n$-player stopping game in \cite{FerensteinMarkovGame}, 
		as well as in the context of \emph{time-inconsistent stopping problems}; 
		see 
		\cite[Example 2.9]{christensen2018finding}, 
		\cite{bayraktar2019time}, 
		and \cite{christensen2020Timemyopic}. Markovian randomized stopping in continuous time is discussed in Remark~\ref{rem:literature-mixed-stopping-cont} (below).
		\defEnd
	\end{remark}
	
	\begin{definition}[Admissible stopping times]\label{def:Admissible}
		For each player $i=1,2$, the set of admissible stopping times, denoted by ${\cal T}_i$, is defined as stopping times with respect to the filtration 
		$\sigma\left(X_0,...,X_n,\xi^{(i)}_{0},...,\xi^{(i)}_{n}\right),n \in \mathbb{N}_0$. 
		\defEnd
	\end{definition}
	\begin{remark}   
		The interpretation of an admissible stopping time is that the decision to stop for each player 
		can be based on the current and previous values of the state process and the randomization device of that player. 
		It is clear that Markovian randomized stopping times are admissible. \defEnd
	\end{remark}
	Both players in our game are assumed (without loss of generality) to be maximizers and we define our notion of equilibrium accordingly.
	\begin{definition}[Nash equilibrium] \label{def:NE}
		A pair of admissible stopping times $(\tilde \tau_1, \tilde \tau_2)$, $\tilde \tau_i\in {\cal T}_i ,i=1,2$ is a  \emph{Nash equilibrium for ${x} \in E$} if
		\begin{align*}
			J_1({x};\tilde \tau_1, \tilde \tau_2) = \sup_{\tau_1\in {\cal T}_1} J_1({x};\tau_1, \tilde \tau_2),	\\
			J_2({x};\tilde \tau_1, \tilde \tau_2) = \sup_{\tau_2\in {\cal T}_2} J_2({x};\tilde \tau_1, \tau_2). 
		\end{align*}
		If a pair of stopping strategies $\left(p^{(1)},p^{(2)}\right)$ corresponds to a Nash equilibrium for $x$ then we refer to it as a 
		\emph{Markovian randomized equilibrium for $x$}. 
		If $\left(p^{(1)},p^{(2)}\right)$ corresponds to a Nash equilibrium \emph{for all} $x \in E$ then we refer to it as a \emph{global Markovian randomized equilibrium}. 
		\defEnd
	\end{definition}  
	
	The aim of the present paper is to study global Markovian randomized equilibria $\left(p^{(1)},p^{(2)}\right)$. 
	We remark that we will often let the \emph{randomized} be implicit when referring to, e.g., Markovian randomized stopping times. 
	\begin{remark}
		From the perspective of classical game theory, it holds that a global Markovian randomized equilibrium $(p^{(1)},p^{(2)})$ is a \textit{subgame perfect equilibrium} as well as a \textit{Markov perfect equilibrium}; for a reference for these terms see e.g., \cite{FudenbergGames1991}. Indeed, subgames for discrete time stopping games are given as those games that start at time $n$ given the realizations of $X_0, X_1, \ldots, X_n$ for any $n \in \mathbb{N}$. Moreover, since the expected rewards for a global Markovian randomized equilibrium depend only on the current state and $(p^{(1)},p^{(2)})$, which by definition is an equilibrium for any $x \in E$, it is clear that $(p^{(1)},p^{(2)})$ is subgame perfect. Noting also that the strategies $(p^{(1)},p^{(2)})$ are Markov in the sense of game theory, i.e., they depend only on past events that are payoff relevant, we immediately see that the equilibrium is also Markov perfect.
		We refer to our equilibria as Markovian randomized equilibria  to emphasize that not only are they Markov perfect, but that the randomization itself has a particular form, namely that the probability of stopping depends only on the current state.\defEnd
	\end{remark}

	If a Markovian stopping strategy $p^{(i)}$ is such that $p^{(i)}(x) \in \{0,1\}$ for each $x \in E$ then there is effectively no randomization and we hence refer to it as \emph{pure}. 
	A global Markovian equilibrium $\left(p^{(1)},p^{(2)}\right)$ is classified as pure if it is comprised of pure Markovian stopping strategies.  
	Note that pure Markovian stopping strategies correspond to entry times of $X$ into subsets in the state space $E$. 
	
	Our approach to studying the present game relies on the consideration of a process $\tilde X=(\tilde X_n)_{n \in \mathbb{N}_0}$ defined (path-wise) as the process $X$ killed at each $n$  with probability $1-\alpha$; which we assume is supported by our probability space. As usual, if $\tilde{X}$ is killed, it means that it is sent to a cemetery state $K$, where it is absorbed and where all functions take the value $0$. 
	In particular, it holds that $X_0= \tilde X_0$ and $X_n= \tilde X_n$ for each $n > 0$ with $\tilde X_n \neq K$. Throughout the paper, we assume that the following integrability conditions, well known from optimal stopping theory, are satisfied.
	
	\begin{assumption}\label{assum:bounded-functions} For the killed process $\tilde X$ it holds, for $i=1,2$, that
		\begin{equation}
			\label{eq:Assumption_Boundedness}
			\sup_{n \in \mathbb{N}_0}|f_i(\tilde{X}_n)|,\,\sup_{n \in \mathbb{N}_0} |g_i(\tilde{X}_n)|,\,\sup_{n \in \mathbb{N}_0} |h_i(\tilde{X}_n)|  \in L^1.
		\end{equation} \defEnd
	\end{assumption}
	Note that this assumption implies that
	\begin{equation}
		\label{eq:weak_integrability_condition}
		\sup_{n \in \mathbb{N}_0} \alpha^n|f_i(X_n)| ,\, \sup_{n \in \mathbb{N}_0} \alpha^n|g_i(X_n)| ,\,\sup_{n \in \mathbb{N}_0} \alpha^n|h_i(X_n)| \in L^1,
	\end{equation}
	which we remark is the standard condition for uniform integrability of the payoff processes for the associated ordinary discounted stopping problems. For later use we introduce the notation 
	\begin{align*}
		M = \sup_{n \in \mathbb{N}_0, i \in \{1,2\}}  \max \{ |f_i(\tilde{X}_n)|, |g_i(\tilde{X}_n)|, |h_i(\tilde{X}_n)| \}
	\end{align*} and note that Assumption~\ref{assum:bounded-functions} implies that $M \in L^1$. 
	
\begin{remark} \label{rem:integrability}
We note that the integrability condition in Assumption~\ref{assum:bounded-functions} is only slightly stronger than \eqref{eq:weak_integrability_condition}. 
Namely, it holds that if $\mathbb{E}_x [\sup_{n \in \mathbb{N}_0} \beta ^n f(X_n)]< \infty$ for some $\beta \in (\alpha,1]$, then 
$\mathbb{E}_x[\sup_{n \in \mathbb{N}_0} f(\tilde{X}_n)] < \infty$ for all non-negative functions $f:E \rightarrow \mathbb R$. Indeed, let $T$ be a geometric random variable with parameter $\alpha$ independent of $(X_n)_{n \in \mathbb{N}}$ and define $\gamma = \alpha/\beta$. Then
		\begin{align*}
			\mathbb{E}_x \Big[ \sup_{n \in \mathbb{N}_0} f(\tilde{X}_n) \Big] 
			&= \mathbb{E}_x \Big[ \sup_{n \le T} f(X_n) \Big] = \mathbb{E}_x \Big[ \sum_{k=0}^\infty (1-\alpha)\alpha^k \sup_{n \le k} f(X_n) \Big]\\ 
			&\leq \mathbb{E}_x \Big[ \sum_{k=0}^\infty \gamma^k \sup_{n \le k} \beta^n f(X_n) \Big] \le \mathbb{E}_x \Big[ \sum_{k=0}^\infty \gamma^k \sup_{n \in \mathbb{N}_0} \beta^n f(X_n) \Big]\\
			& = \frac{1}{1-\gamma} \mathbb{E}_x \Big[ \sup_{n \in \mathbb{N}_0} \beta^n f(X_n) \Big] < \infty.
		\end{align*}
	In Example~\ref{ex:no-NE} (below) we argue how our integrability condition is essential for obtaining general equilibrium existence. 
	\defEnd
	\end{remark}

	\begin{remark}[On Markovian randomized stopping in continuous time]\label{rem:literature-mixed-stopping-cont}
		A continuous time interpretation of Markovian randomized stopping is to stop according to a state dependent stopping rate; see \cite[Section 2.1]{christensen2020time} for a motivation. 
		Typically this state-dependent intensity is of Lebesgue density type (see \cite{christensen2020time} for a definition). 
		Generally there is however no need to restrict attention to such intensities; in fact, it turns out that allowing the intensity to increase in a singular fashion using a local time construction may facilitate the existence of equilibria (in \cite {bodnariu2024local} a general construction of this type of randomized stopping is introduced in the context of studying a time-inconsistent stopping problem in an SDE setting). 
		In \cite[Example 5.4]{ekstrom2017dynkin}, a similar local-time construction is used to find randomized equilibria for a two-player stopping game in a setting where the players essentially agree that the state process is driven by an SDE, but disagree on what the drift is. 
		In \cite{decamps2022mixed} this type of local-time-based randomized stopping time is used in the study of a general formulation of a two-player stopping game corresponding to the \emph{war of attrition} in an SDE setting.  
In \cite{christensen2024general} similar continuous time Markovian randomized stopping times are used to arrive at a complete equilibrium solution to a general formulation, i.e., without ordering conditions for the payoffs, of the zero-sum game for a one-dimensional diffusion.
It seems to us an interesting question for future research which class of randomized stopping times one should use for a general theory of Markovian Dynkin games in continuous time.
		\defEnd
	\end{remark}

	\section{Best Response Mapping}\label{sec:Optimization}
	In this section we define and establish properties of the one-player \emph{best response mapping} corresponding to our game. 
	The analysis is without loss of generality carried out from the view-point of player $1$, 
	i.e., the exact same analysis can also be carried out from the view-point of player $2$. 
	The main results are 
	Propositions~\ref{thm:CharacterizationValues}  and \ref{thm:characterization_optimal_strategies} which are essential in subsequent sections.

	In order to define this best response mapping we consider the optimal (in the usual sense) stopping problem that player $1$ faces when player $2$ employs a fixed Markovian stopping strategy $p^{(2)}:E \rightarrow [0,1]$, i.e.,
	\begin{align} \label{sec3:optstop1}
		\sup_{\tau_1\in {\cal T}_1} J_1\left({x};\tau_1, \tau^{p^{(2)}}\right), {x} \in E.
	\end{align}
	It turns out that a solution to this problem can be found in the class of Markovian stopping times; see Proposition~\ref{thm:characterization_optimal_strategies} below.  
	Hence, we may define a best-response mapping corresponding to problem \eqref{sec3:optstop1} for each \emph{fixed} $x \in E$ restricted to Markovian stopping times, i.e., as the (point-to-set) mapping defined according to 
	\begin{align*} 
		\text{BR}^{(1)}: E \times \mathcal{M}(E,[0,1])  &\rightarrow {\cal P}\left(\mathcal{M}(E,[0,1]) \right) \\
			\left({x},p^{(2)}\right)  &\mapsto \text{BR}^{(1)}\left({x}, p^{(2)}\right):= \argmax_{p^{(1)}} J_1\left({x};\tau^{p^{(1)}}, \tau^{p^{(2)}}\right)
	\end{align*}
	where $\mathcal{M}(E,[0,1])$ denotes the set of measurable functions $E \rightarrow [0,1]$ and 
	${\cal P}\left(\mathcal{M}(E,[0,1]) \right)$ denotes the corresponding power set.
	The interpretation is that $\text{BR}^{(1)}\left(x,p^{(2)}\right)$ is the set of optimal stopping strategies of Markovian type for problem 
	\eqref{sec3:optstop1}, i.e., the set of optimal stopping strategy functions $E \rightarrow [0,1]$ for player $1$, given that player $2$ employs the strategy $p^{(2)}$, for a  fixed initial state $x$.
	
	However, our main interest in the present paper is to study Markovian strategies that are equilibria \emph{for all} initial states (i.e., global Markovian equilibria, cf. Definition~\ref{def:NE}), 
	and for this reason the following definition  
	-- which gives as output the set of Markovian strategies which are optimal responses to a fixed strategy $p^{(2)}$ \emph{for all} $x \in E$ -- 
	will, as we shall see, be the right one. 
	\begin{definition}[One-player best response mapping]\label{def:best-response-mapping} We call the set-valued mapping 		
		\begin{align}
			\label{best-response-mapping}
			\text{BR}^{(1)}: \mathcal{M}(E,[0,1]) &\rightarrow {\cal P}\left(\mathcal{M}(E,[0,1]) \right)\\
				p^{(2)} & \mapsto \text{BR}^{(1)}\left(p^{(2)}\right):=\bigcap_{{x} \in E} \text{BR}^{(1)}\left({x},p^{(2)}\right),\nonumber
		\end{align}
	the one-player best response mapping. \defEnd
	\end{definition}
	The following result will be proved at the end of this section.
	\begin{prop}
		\label{thm:CharacterizationValues}
		For any fixed stopping strategy $p^{(2)}:E \rightarrow [0,1]$, the set of best responses 
		$\text{BR}^{(1)}\left(p^{(2)}\right)$  is non-empty, convex, and closed (in the product topology). 
	\end{prop}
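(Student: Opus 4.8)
The plan is to reduce the problem to the analysis of a single discounted optimal stopping problem and to exploit the linearity of the reward in the stopping strategy $p^{(1)}$. First I would fix $p^{(2)}$ and rewrite the payoff $J_1(x;\tau^{p^{(1)}},\tau^{p^{(2)}})$ as the value of an ordinary (discounted) optimal stopping problem for player $1$: on the event $\{\tau^{p^{(2)}}=n\}$ the state process is effectively killed, and conditioning on the independent randomization devices one obtains, for a fixed $p^{(2)}$, a Markovian optimal stopping problem with a modified (killed) reward process whose payoffs inherit the integrability from Assumption~\ref{assum:bounded-functions} (via the bound $M\in L^1$). Standard discrete-time optimal stopping theory (Snell envelope / Wald--Bellman) then guarantees that an optimal stopping time exists and, crucially, can be taken to be the first entrance time of the relevant (killed) Markov process into the ``stopping region'' $\{x : \text{value}=\text{immediate reward}\}$; this is the content invoked by Proposition~\ref{thm:characterization_optimal_strategies}. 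Non-emptiness of $\text{BR}^{(1)}(x,p^{(2)})$ for each $x$ follows, and one must check that a single Markovian strategy works simultaneously for all $x$ — but here the stopping region is intrinsic to the problem (it does not depend on the initial state $x$), so the pure first-entrance strategy into that region lies in $\text{BR}^{(1)}(x,p^{(2)})$ for every $x$, giving $\text{BR}^{(1)}(p^{(2)})\neq\emptyset$.

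For convexity I would use that $\tau^{p^{(1)}}$ enters the reward linearly through its one-step stopping probabilities. Concretely, for each state $x$ the value $J_1(x;\tau^{p^{(1)}},\tau^{p^{(2)}})$ can be written, by a first-step decomposition and the tower property, as an affine function of $p^{(1)}(x)$ with the remaining dependence on $p^{(1)}$ only through the ``continue'' branch; iterating, one sees that $p^{(1)}\mapsto J_1(x;\tau^{p^{(1)}},\tau^{p^{(2)}})$ is, in an appropriate sense, multiaffine and — more importantly — that the set of maximizers is characterized pointwise: at states $x$ where the stopping value strictly exceeds the continuation value one must have $p^{(1)}(x)=1$, at states where it is strictly smaller one must have $p^{(1)}(x)=0$, and at states of indifference $p^{(1)}(x)$ may be arbitrary in $[0,1]$. (The indifference states form a genuine product structure because the continuation value at $x$ does not depend on $p^{(1)}(x)$ itself, only on $p^{(1)}$ off a set that can be handled by the optimality equation.) This description exhibits $\text{BR}^{(1)}(p^{(2)})$ as a product of intervals (each either a single point $\{0\}$, a single point $\{1\}$, or all of $[0,1]$) over the states, which is manifestly convex.

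Closedness in the product topology then follows from the same pointwise description, together with continuity/measurability of the map that assigns to a state its stopping value minus continuation value: the ``forced'' coordinates ($p^{(1)}(x)\in\{0\}$ or $\{1\}$) are closed conditions, and a product of closed sets is closed in the product topology. Alternatively, one can argue by a dominated-convergence/Fatou argument: if $p_k^{(1)}\to p^{(1)}$ pointwise with each $p_k^{(1)}$ optimal, then $\tau^{p_k^{(1)}}\to\tau^{p^{(1)}}$ in a suitable sense $\mathbb{P}_x$-a.s.\ (using that the randomization variables $\xi_n^{(1)}$ are atomless, so the indicator events $\{p_k^{(1)}(X_n)\ge\xi_n^{(1)}\}$ converge a.s.), and then Assumption~\ref{assum:bounded-functions} permits passing to the limit in $J_1$, so the limit is again optimal for every $x$.

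I expect the main obstacle to be the verification that optimality can be achieved by a \emph{single} Markovian strategy valid for all initial states $x$ simultaneously, i.e.\ that the intersection $\bigcap_{x\in E}\text{BR}^{(1)}(x,p^{(2)})$ is non-empty and has the clean product structure claimed — this is exactly where one needs that the optimal stopping region is state-independent and that the Wald--Bellman characterization pins down $p^{(1)}$ pointwise. The second delicate point is making the ``multiaffine / pointwise indifference'' argument rigorous when $E$ is uncountable, where one should phrase the continuation value via the Snell envelope of the killed problem and invoke Proposition~\ref{thm:characterization_optimal_strategies} rather than an explicit first-step computation; once that is in place, convexity and closedness are comparatively routine.
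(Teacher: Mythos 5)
Your proposal is correct and follows essentially the same route as the paper: the paper proves Proposition~\ref{thm:CharacterizationValues} as a direct corollary of Proposition~\ref{thm:characterization_optimal_strategies}(Aiii)--(B), which identifies $\text{BR}^{(1)}(p^{(2)})$ with the set of measurable $p$ satisfying the pointwise conditions \eqref{eq:optimal-p} ($p=1$ on the strict stopping set, arbitrary in $[0,1]$ on the indifference set, $p=0$ elsewhere), and then reads off non-emptiness, convexity and closedness by inspection of this product structure. Your reduction to the killed/absorbed auxiliary stopping problem, the state-independence of the stopping region, and the resulting product-of-intervals description are exactly the ingredients the paper uses.
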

	In order for \eqref{best-response-mapping} to be a suitable definition we need, as we have mentioned, that a maximizer in \eqref{sec3:optstop1} can be attained in the set of Markovian stopping strategies (this result is contained in Proposition~\ref{thm:characterization_optimal_strategies} below). 
	Our approach to showing that this is the case is to establish a link between problem \eqref{sec3:optstop1} and an associated optimal stopping problem (see \eqref{sec3:optstop2}, below), which we can study using standard methods. 
	
Note that \eqref{sec3:optstop1} is a non-standard optimal stopping problem in the sense that the underlying process is exogenously stopped randomly according to the strategy $p^{(2)}$. A main feature of the associated problem \eqref{sec3:optstop2} is that it will be constructed without discounting which in particular means that the non-standard stopping feature corresponding to $p^{(2)}$ can be treated as a more standard \emph{absorption} feature for an associated state process $\hat X$ (see the below for details).

	To be able to construct the associated stopping problem we define a new reward function -- see \eqref{r-reward} below -- and an associated Markov process 
	$\hat{X}=(\hat{X}_n)_{n \in \mathbb{N}_0}$ defined on a state space
	\[
	\hat{E} = (E \times \{C,S\}) \cup \{K\}
	\] 
	in a \emph{path-wise} manner based on 
	our state process $X$,
	its killed counterpart $\tilde X$, and 
	the stopping behavior of player $2$. We denote elements in $\hat E$ by $\hat x$. Before giving the formal definition of $\hat X$ let us give an interpretation of the state space: 
	(i) if $\hat{X}_n=(x,S)$ then player $2$ has stopped before time $n$ (i.e., $\tau^{p^{(2)}}<n$) and $x=X_{\tau^{p^{(2)}}}$, where $(x,S),x \in E,$ are absorbing states, 
	(ii) if $\hat{X}_n=K$, then killing has occurred (recall that $K$ is a cemetery state), and  
	(iii) if $\hat{X}_n=(x,C)$ then neither killing nor stopping has occurred. 
	The formal definition of $\hat{X}$ is as follows:

	\begin{definition}[Killed and absorbed version of the state process]  \label{def:X-hat}
		The initial value of the process $\hat{X}=(\hat{X}_n)_{n \in \mathbb{N}_0}$ is $\hat{X}_0 = (X_0,C)$.  For each $n>1$:
		\begin{itemize}
			\item  if $\hat X_{n-1} = (X_{n-1},C)$ \vspace{-2mm}
			\begin{itemize}
				\item and $\tau^{p^{(2)}}=n-1$, then $\hat X_n = (X_{n-1},S)$  \vspace{-2mm}
				\item  and $\tau^{p^{(2)}} > n-1$ and $\tilde X_n = K$, then $\hat X_n = K$ (recall that $\tilde X$ is the killed version of $X$; see Section~\ref{sec:model})\vspace{-2mm}
				\item  and $\tau^{p^{(2)}} > n-1$ and $\tilde X_n \neq K$, then $\hat X_n = (X_n,C)$				\vspace{-2mm}		
			\end{itemize}		
			\item if $\hat X_{n-1} = (X_{m},S)$ for some $m<n-1$, then $\hat X_n = \hat X_{n-1}$ 	\vspace{-2mm}
			\item if $\hat X_{n-1} = K$, then $\hat X_n = K$. \vspace{-2mm}
			\defEnd
		\end{itemize} 
	\end{definition}	
	Noting that
	\begin{align*}
		\hat E_{S \cup K}:=\{ \hat x \in \hat E : \hat x = K  \mbox{ or }  \hat x = (x,S) \mbox{ for some } x \in E\}
	\end{align*}
	are absorbing states, we see that $\hat{X}$ is a Markov process on $\hat{E}$ and we denote the expectation associated to $\hat{X}_0 = \hat{x} \in \hat{E}$ by $\mathbb{E}_{\hat{x}}$. 
	The Markov kernel of $\hat{X}$, denoted by $\hat \Pi$, can be represented by 
	\begin{align}\label{markov-kernel}
		\begin{split}
			\hat{\Pi}((x,S),\hat B) &= \delta_{(x,S)}(\hat B) \\
			\hat{\Pi}(K,\hat B) &= \delta_{K}(\hat B)\\
			\hat{\Pi}((x,C), \hat B) 
			&= p^{(2)}(x) \delta_{(x,S)}(\hat B)  + (1- p^{(2)}(x))(1-\alpha)\delta_{K}(\hat B) \\
			& \enskip + (1-p^{(2)}(x)) \alpha \Pi (x, \{y \in E: (y,C) \in \hat B\}),
		\end{split}
	\end{align}
	where $\hat B \subseteq \hat E$. 
	To see that this holds, note, for example, that $\hat{\Pi}((x,C), \hat B)$ is a convex combination of 
	the measure $\delta_{(x,S)}$ (with weight $p^{(2)}(x)$, corresponding to the probability that player $2$ stops), 
	the measure $\delta_K$ (with weight $(1-p^{(2)}(x))(1-\alpha)$, corresponding to the probability that player $2$ does not stop and the process is killed),  
	and the Markov kernel for $X$, i.e., $\Pi$, (with weight $(1-p^{(2)}(x))\alpha$, corresponding to the probability that neither stopping nor killing occurs).  
	
	We are now ready to present the associated stopping problem. It is
	\begin{align}\label{sec3:optstop2}
		\hat V({\hat x}):=\sup_{\tau \in \hat {\cal T}_1} \mathbb{E}_{\hat{x}} \left[ \hat{r}(\hat{X}_\tau) \right], \quad \hat{x} \in \hat{E},
	\end{align} 
	where $\hat r: \hat E \rightarrow \mathbb{R}$ is defined by 
	\begin{align}
		\begin{split} \label{r-reward}
			\hat{r}(\hat x)  &:= 
			\begin{cases}
				(1-p^{(2)}(x))f_1(x) + p^{(2)}(x)h_1(x), & \hat x = (x,C)\\
				g_1(x), & \hat x = (x,S)\\
				0, & \hat x = K,
			\end{cases}
		\end{split}
	\end{align}
	and $\hat {\cal T}_1$ is the set of a.s. finite stopping times for the filtration $\sigma\left(\hat X_0,...,\hat X_n,\xi^{(1)}_{0},...,\xi^{(1)}_{n}\right), n \in \mathbb{N}_0$. In order to study \eqref{sec3:optstop2} we consider stopping times of the kind
	\begin{align} \label{eq:admiss-for-alt-prob}
		\hat \tau^{(p)}:= 
		\inf \{ n \in \mathbb{N}_0: p(X_n) \ge \xi^{(i)}_n\}
		\wedge \tau_{\hat E_{S \cup K}}
	\end{align}
	where $p:E\rightarrow [0,1]$ is measurable and $\tau_{\hat E_{S \cup K}}:= \inf\{n \in\mathbb{N}_0: \hat{X}_n  \in \hat E_{S \cup K} \}$ is the first entry time for $\hat{X}$ into the set of absorbing states $\hat{E}_{S \cup K}$. 
	Note that $\hat \tau^{(p)}$ is a.s. finite since $\tau_{\hat E_{S \cup K}}$ is so; see  the proof of Proposition~\ref{thm:characterization_optimal_strategies} below. 
	In particular, it holds that $\hat \tau^{(p)}\in \hat {\cal T}_1$ for any measurable function $p:E\rightarrow [0,1]$, and in Proposition~\ref{thm:characterization_optimal_strategies} we shall also see that stopping times of this kind attain the supremum in \eqref{sec3:optstop2}. 
	We also need
	\begin{align}\label{eq:indiff-cont-sets}
		\begin{split}
			\hat D &: = \{(x,C) \in \hat E: \hat{\Pi} \hat{V}(x,C) < \hat r(x,C)\}\\
			\hat I &: = \{(x,C) \in \hat E : \hat{\Pi} \hat{V}(x,C) = \hat r(x,C)\}
		\end{split}
	\end{align}
	which we interpret as 
	a \emph{(strict) stopping set} and 
	the \emph{indifference between stopping and continuing set}, 
	for problem \eqref{sec3:optstop2}, respectively, restricted to non-absorbed states $\hat x = (x,C)$. 
	It is now easily verified that 
	\begin{align*}
		\hat D\cup \hat I \cup \hat E_{S \cup K} = \{ \hat x \in \hat E: \hat{V}(\hat x) = \hat r(\hat x)\}
	\end{align*} 
	i.e., $\hat D\cup \hat I \cup \hat E_{S \cup K}$ is the stopping set -- in the usual sense of optimal stopping theory -- for the optimal stopping problem \eqref{sec3:optstop2}.

	\begin{prop}\label{thm:characterization_optimal_strategies} 
		(Ai) The first entry time
		\begin{align} \label{eq:opt-stop-problem2}
			\tau_{\hat D\cup \hat I \cup \hat E_{S \cup K}}: =
			\inf \{ n \in \mathbb{N}_0: \hat X_n \in \hat D\cup \hat I \cup \hat E_{S \cup K} \}
		\end{align}
		is an optimal stopping time for problem \eqref{sec3:optstop2}. 
		Moreover, the optimal value function $\hat{V}: \hat{E} \rightarrow \mathbb{R}$ 
		corresponding to  \eqref{sec3:optstop2} satisfies, for $\hat x \in \hat E$, the Wald-Bellman equation
		\begin{align}
			\label{eq:OptimalityEquation}
			\hat{V}(\hat{x}) = \max \{\hat{\Pi}\hat{V}(\hat{x}), \hat{r}(\hat{x})\}.
		\end{align} 
		(Aii)
		The optimal stopping time \eqref{eq:opt-stop-problem2} 
		can be represented in the form \eqref{eq:admiss-for-alt-prob} 
		with
		\begin{align*}
			\begin{split}
				\begin{cases}
					p(y) = 1, &       \text{ for all $(y,C)\in \hat D  \cup \hat I$}\\
					p(y) = 0, &        \text{ otherwise}.
				\end{cases}
			\end{split}
		\end{align*}
		(Aiii) Let $p: E \rightarrow [0,1]$ be a measurable function. The stopping time $\hat \tau^{(p)}$ (cf. \eqref{eq:admiss-for-alt-prob}) is optimal in \eqref{sec3:optstop2} for all non-absorbing initial states $\hat{x} \in \{(x,C): x \in E\}$ if and only if
		\begin{align}
			\begin{split}\label{eq:optimal-p}
				\begin{cases}
					p(y) = 1, &       \text{ for all $y\in E$ such that $(y,C)\in \hat D$}\\
					p(y) \in [0,1], & \text{ for all $y\in E$ such that $(y,C)\in \hat I$}\\
					p(y) = 0, &       \text{ otherwise}.
				\end{cases}
			\end{split}
		\end{align}
		
		(B) 
		Let $p:E \rightarrow[0,1]$ be a measurable function. 
		The stopping time $\tau^{(p)} \in \mathcal{T}_1$ (cf. \eqref{eq:M-rand-stop-time}) is optimal for the stopping problem \eqref{sec3:optstop1} with initial value 
		$x \in E$ if and only if the stopping time $\hat \tau^{(p)}\in {\hat{\mathcal{T}}}_1$ (cf. \eqref{eq:admiss-for-alt-prob}) is optimal for the stopping problem \eqref{sec3:optstop2} with initial value $\hat{x} = (x,C)$.
		Moreover, the corresponding optimal values coincide, i.e.,
		\begin{align}
			\label{eq:value_original_vs_aux}
			\sup_{\tau_1\in {\cal T}_1} J_1\left(x;\tau_1, \tau^{p^{(2)}}\right) =\sup_{\tau \in \hat {\cal T}_1} \mathbb{E}_{(x,C)} \left[ \hat{r}(\hat{X}_\tau) \right].
		\end{align}
		(Recall that the problem in the right hand side of \eqref{eq:value_original_vs_aux} depends on $p^{(2)}$ via \eqref{markov-kernel} and \eqref{r-reward}.)
	\end{prop}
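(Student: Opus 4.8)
The plan is to establish the four parts in the stated order, part~(B) building on parts~(Ai)--(Aii).

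For (Ai), I would first record the two structural facts that make standard discrete-time optimal stopping theory applicable to \eqref{sec3:optstop2}. From the kernel \eqref{markov-kernel}, the one-step probability of staying in a non-absorbed state $(x,C)$ is $(1-p^{(2)}(x))\alpha\le\alpha<1$, so $\tau_{\hat E_{S\cup K}}$ is dominated by a geometric variable and in particular a.s.\ finite; hence every stopping time of the form \eqref{eq:admiss-for-alt-prob} is a.s.\ finite and belongs to $\hat{\mathcal T}_1$. Moreover, by the construction of $\hat X$ the process can sit in $(x,C)$ only while $X$ has not been killed, and it is recorded in $(x,S)$ only when player~2's stop occurred before killing, so in every non-cemetery state the underlying point equals $\tilde X_m$ for some $m\le n$; consequently $\sup_n|\hat r(\hat X_n)|\le M\in L^1$ by Assumption~\ref{assum:bounded-functions}. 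With a.s.\ absorption and this integrability in hand, the Snell-envelope/Wald--Bellman machinery gives that $\hat V$ is the smallest $\hat\Pi$-superharmonic majorant of $\hat r$, satisfies \eqref{eq:OptimalityEquation}, and that the first entry time into $\{\hat V=\hat r\}$ is optimal once it is a.s.\ finite. Since on absorbing states $\hat V=\hat r$ automatically while on $(x,C)$ one has $\hat V=\hat r$ iff $\hat\Pi\hat V\le\hat r$, the set $\{\hat V=\hat r\}$ is exactly $\hat D\cup\hat I\cup\hat E_{S\cup K}$, and $\tau_{\hat D\cup\hat I\cup\hat E_{S\cup K}}\le\tau_{\hat E_{S\cup K}}<\infty$; this proves (Ai).

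Part (Aii) is then immediate: before absorption $\hat X_n=(X_n,C)$, so with $p(y)=\mathbb I_{\{(y,C)\in\hat D\cup\hat I\}}$ and $\xi^{(1)}_n\in(0,1)$ a.s., one gets $\inf\{n:p(X_n)\ge\xi^{(1)}_n\}=\inf\{n:(X_n,C)\in\hat D\cup\hat I\}$, hence $\hat\tau^{(p)}=\tau_{\hat D\cup\hat I\cup\hat E_{S\cup K}}$. For (Aiii) I would use the standard characterisation of optimal stopping times: an a.s.\ finite $\sigma$ is optimal for \eqref{sec3:optstop2} started at $\hat x$ iff $\hat r(\hat X_\sigma)=\hat V(\hat X_\sigma)$ a.s.\ and the stopped process $(\hat V(\hat X_{n\wedge\sigma}))_n$ is a martingale, equivalently $\hat X_n\notin\hat D$ for all $n<\sigma$ (the strict supermartingale steps of $\hat V(\hat X_\cdot)$ being exactly those out of $\hat D$). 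Demanding optimality of $\hat\tau^{(p)}$ from \emph{every} state $(x,C)$, and using that each $(y,C)$ is reachable from itself, the first condition forces $p(y)=0$ on every strict continuation state (otherwise there is positive probability of stopping where $\hat V>\hat r$) and the second forces $p(y)=1$ on $\hat D$ (otherwise, started at that state, there is positive probability of taking the strict supermartingale step out of $\hat D$); conversely any $p$ as in \eqref{eq:optimal-p} has $\hat\tau^{(p)}$ stop only on $\hat D\cup\hat I\cup\hat E_{S\cup K}$ and never step out of $\hat D$, so both conditions hold. This is (Aiii).

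The core of (B) is the pathwise identity $J_1(x;\tau_1,\tau^{p^{(2)}})=\mathbb E_{(x,C)}[\hat r(\hat X_{\tau_1\wedge\tau_{\hat E_{S\cup K}}})]$ for every $\tau_1\in\mathcal T_1$. To prove it I would realise $X$, the devices $\xi^{(1)},\xi^{(2)}$ (hence $\tau^{p^{(2)}}$) and the killing time $\zeta$ jointly on our probability space, with $\zeta$ independent of the rest and $\mathbb P(\zeta>n\mid X,\xi^{(1)},\xi^{(2)})=\alpha^n$; then $\hat X_n=(X_n,C)$ iff $\tau^{p^{(2)}}\ge n$ and $\zeta>n$, so $\tau_{\hat E_{S\cup K}}=\min(\tau^{p^{(2)}}+1,\zeta)$, with absorption into $(X_{\tau^{p^{(2)}}},S)$ on $\{\tau^{p^{(2)}}<\zeta\}$ and into $K$ otherwise. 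Splitting $\mathbb E_{(x,C)}[\hat r(\hat X_{\tau_1\wedge\tau_{\hat E_{S\cup K}}})]$ over the events ``player~1 stops in a continuation state'' ($\tau_1\le\tau^{p^{(2)}}$, $\tau_1<\zeta$), ``absorption into an $S$-state'' ($\tau_1>\tau^{p^{(2)}}$, $\tau^{p^{(2)}}<\zeta$) and ``absorption into $K$'' (reward $0$), then integrating out $\zeta$ (which replaces $\mathbb I_{\{\tau_1<\zeta\}}$ by $\alpha^{\tau_1}$ and $\mathbb I_{\{\tau^{p^{(2)}}<\zeta\}}$ by $\alpha^{\tau^{p^{(2)}}}$) and conditioning on $X$ (under which $1-p^{(2)}(X_{\tau_1})$ times the conditional probability of $\{\tau^{p^{(2)}}\ge\tau_1\}$ becomes that of $\{\tau^{p^{(2)}}>\tau_1\}$, and $p^{(2)}(X_{\tau_1})$ times it becomes that of $\{\tau^{p^{(2)}}=\tau_1\}$), the three terms turn into exactly the $f_1$-, $h_1$- and $g_1$-terms of $J_1$. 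Given the identity: $\tau_1\wedge\tau_{\hat E_{S\cup K}}\in\hat{\mathcal T}_1$ for every $\tau_1\in\mathcal T_1$ (it is a.s.\ finite, and adapted since $X_0,\dots,X_n$ are recovered from $\hat X_0,\dots,\hat X_n$ on $\{\tau_{\hat E_{S\cup K}}>n\}$), hence $\sup_{\tau_1\in\mathcal T_1}J_1\le\hat V(x,C)$; the reverse holds since by (Ai)--(Aii) the optimiser is $\tau_{\hat D\cup\hat I\cup\hat E_{S\cup K}}=\tau^{(p^\ast)}\wedge\tau_{\hat E_{S\cup K}}$ with $p^\ast=\mathbb I_{\hat D\cup\hat I}\in\mathcal M(E,[0,1])$ and $\tau^{(p^\ast)}\in\mathcal T_1$, giving $\hat V(x,C)=J_1(x;\tau^{(p^\ast)},\tau^{p^{(2)}})\le\sup_{\tau_1}J_1$. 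This yields \eqref{eq:value_original_vs_aux}, and the equivalence follows because $\hat\tau^{(p)}=\tau^{(p)}\wedge\tau_{\hat E_{S\cup K}}$, so the identity with $\tau_1=\tau^{(p)}$ reads $J_1(x;\tau^{(p)},\tau^{p^{(2)}})=\mathbb E_{(x,C)}[\hat r(\hat X_{\hat\tau^{(p)}})]$, whence one side attains the common optimal value iff the other does. I expect this pathwise identity to be the main obstacle: it requires carefully reconciling the one-step delay with which player~2's stop is recorded in $\hat X$, the precedence of stopping over killing within a single step, and the identification of the discount factor with $\mathbb P(\zeta>n)=\alpha^n$, so that every indicator and weight matches the corresponding term of $J_1$; the optimal-stopping inputs behind (Ai) and (Aiii) are, by contrast, routine once absorption and integrability have been secured.
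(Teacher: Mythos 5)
Your proposal is correct and follows the paper's architecture almost step for step: the same integrability bound $\sup_n|\hat r(\hat X_n)|\le M$ and geometric bound on $\tau_{\hat E_{S\cup K}}$ for (Ai), the same per-state contradiction (a strictly suboptimal convex combination of $\hat r$ and $\hat\Pi\hat V$) for the necessity half of (Aiii), and the same key pathwise identity $J_1(x;\tau_1,\tau^{p^{(2)}})=\mathbb{E}_{(x,C)}[\hat r(\hat X_{\tau_1\wedge\tau_{\hat E_{S\cup K}}})]$ for (B) — your derivation via an explicit independent killing time $\zeta$ with $\mathbb{P}(\zeta>n)=\alpha^n$, integrated out to produce the $\alpha^{\tau}$ factors, is just a repackaging of the paper's conditional-expectation computation \eqref{eq:RelationRewardGameStopping}, and you correctly identify the one-step delay in recording player 2's stop and the stop-before-kill precedence as the points that need care. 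The one place where you genuinely diverge is the sufficiency half of (Aiii): you invoke the martingale characterization of optimal stopping times ($\hat r(\hat X_\sigma)=\hat V(\hat X_\sigma)$ a.s.\ together with $(\hat V(\hat X_{n\wedge\sigma}))_n$ being a martingale), noting that any $p$ satisfying \eqref{eq:optimal-p} stops only on $\{\hat V=\hat r\}$ and, since $p\equiv 1$ on $\hat D$, never takes a strict supermartingale step. The paper instead interpolates: it defines $\tau_N$ as ``play $p$ up to time $N$, then switch to the entry time of (Aii)'', shows each $\tau_N$ has the same value by an explicit telescoping computation, and passes to the limit by dominated convergence. Your route is shorter and more conceptual but leans on optional sampling at an unbounded stopping time, so you should make explicit that the uniform integrability of $(\hat V(\hat X_n))_n$ follows from $|\hat V(\hat X_n)|\le\mathbb{E}[\,\sup_m|\hat r(\hat X_m)|\mid\mathcal F_n]\le\mathbb{E}[M\mid\mathcal F_n]$; the paper's route avoids this citation at the cost of a longer computation. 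Your handling of the reverse inequality in (B) (exhibiting the optimizer of (Ai)--(Aii) as an element of $\mathcal{T}_1$, rather than setting up the full correspondence between $\hat{\mathcal T}_1$ and $\mathcal T_1$ modulo absorption) is a minor, legitimate shortcut.
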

	\begin{remark} 
		Note that the optimal stopping problems \eqref{sec3:optstop1} and \eqref{sec3:optstop2} are equivalent (in the sense of Proposition~\ref{thm:characterization_optimal_strategies}(B)) only when the initial state in the latter problem is non-absorbing, i.e., when $\hat x = (x,C)$ for some $x\in E$. 
		\defEnd
	\end{remark}

	\begin{proof}[Proof of Proposition~\ref{thm:characterization_optimal_strategies}]
		(Ai) Note  that $\hat{X}$ is a time-homogeneous Markov process on $\hat{E}$. 
		Writing $k_1=\max\{|f_1|,|g_1|,|h_1|\}$, it is clear by construction (Definition~\ref{def:X-hat}) that $|\hat{r}(\hat{X}_n)|\leq \max_{m\leq n}k_1(\tilde{X}_m)\leq M$, so that by Assumption~\ref{assum:bounded-functions} we have
		\begin{align}
			\label{eq:bounded_transf}
			\mathbb{E}_{x}\left[ \sup_{n \in \mathbb{N}_0} |\hat{r}(\hat{X}_n)| \right]\leq \mathbb{E}_{x}\left[ \sup_{n \in \mathbb{N}_0} |k_1(\tilde{X}_n)| \right]<\mathbb{E}_{x}\left[ M \right] <\infty.
		\end{align}
		Moreover, the stopping time \eqref{eq:opt-stop-problem2}	is a.s. finite -- this is verified using the killing feature of $\hat X$, which implies that 
		$
		\mathbb{P}_x(\tau_{\hat E_{S \cup K}} = \infty) \le \lim_{n \rightarrow \infty} \alpha^n = 0.
		$
		Hence, the result follows from standard optimal stopping theory (see, e.g., \cite[Theorem 1.11]{peskir2006optimal}). 
		
		(Aii) This result is directly verified.

		(Aiii) 
		Let us first assume that $\hat \tau^{(p)}$ is optimal  in \eqref{sec3:optstop2} and show that this implies that $p$ satisfies \eqref{eq:optimal-p}. The second part of \eqref{eq:optimal-p} is trivially satisfied. Moreover, it follows from (Ai) and standard optimal stopping theory 
		(ibid.) 
		that $\tau_{\hat D  \cup \hat I \cup \hat E_{S \cup K}} \le \hat \tau^{(p)}$ a.s. 
		From this  (as well as (Ai)-(Aii)) it directly follows that $p$ satisfies the last part of \eqref{eq:optimal-p}.

		Now assume (to obtain a contradiction) that $p$ does not satisfy the first part of \eqref{eq:optimal-p}, i.e., $p(x_0)<1$ for some $x_0 \in E$ such that $(x_0,C) \in \hat D$. 
		Using basic observations and the definition of $\hat D$ in \eqref{eq:indiff-cont-sets} we obtain
		\begin{align*}
			\mathbb{E}_{(x_0,C)} \left[ \hat{r}(\hat{X}_{\hat \tau^{(p)}}) \right] 
			&= p(x_0) \hat{r}(x_0,C) + (1-p(x_0)) \mathbb{E}_{(x_0,C)} \left[ \mathbb{E}_{\hat{X}_1} \left[ \hat{r}(\hat{X}_{\hat \tau^{(p)}})  \right] \right] \\
			&\le p(x_0) \hat{V}(x_0,C) + (1-p(x_0)) \mathbb{E}_{(x_0,C)} \left[ V(\hat{X}_1) \right] \\
			&=  p(x_0) \hat{V}(x_0,C) + (1-p(x_0))\hat{\Pi} \hat{V}(x_0,C) < \hat{V}(x_0,C).
		\end{align*}
		Hence, $\hat \tau^{(p)}$ cannot be optimal for $(x_0,C)$ and we have a contradiction. It follows that $p$ satisfies \eqref{eq:optimal-p}. 
		
		Let us now assume that $p$ satisfies \eqref{eq:optimal-p} and show that the corresponding stopping time $\hat \tau^{(p)}$ 
		is optimal for \eqref{sec3:optstop2} for any initial value $\hat x = (x, C)$. 
		Let $\tilde{p}: E \rightarrow [0,1]$ be defined so that it corresponds to \eqref{eq:opt-stop-problem2}, i.e., as in (Aii). 
		For $N \in \mathbb{N}_0$ let
		\begin{align*}
			\tau_N:= \inf \big\{n \in \mathbb{N}_0: \big( p(X_n) \ge \xi_n^{(1)} \mbox{ and } n < N \big)  \mbox{ or }  \big(\tilde p(X_n) \ge \xi_n^{(1)} \mbox{ and } n \geq N \big) \big\} \wedge \tau_{\hat E_{S \cup K}}
		\end{align*}
		which corresponds to using the strategy $p$ before time $N$ and using the strategy $\tilde p$ from time $N$ and onwards (until absorption). 
		Note that $\tau_0$ is then equal to the stopping time \eqref{eq:opt-stop-problem2}, and $\tau_0$ is therefore   optimal in \eqref{sec3:optstop2}. 
		We now show that each stopping time $\tau_N, N \in \mathbb{N}_0,$ is optimal. Note that 
		\begin{align*}
			&\mathbb{E}_{(x,C)} \left[ \hat{r}(\hat{X}_{\tau_{N+1}})\right] \\ 
			&= \mathbb{E}_{(x,C)} \left[ \sum_{n=0}^\infty \mathbb{I}_{\{\tau_{N+1} = n\}} \hat{r}(\hat{X}_n) \right] \\
			&= \mathbb{E}_{(x,C)} \left[ \sum_{n=0}^N \mathbb{I}_{\{\tau_{N+1} = n\}} \hat{r}(\hat{X}_n) + \mathbb{I}_{\{\tau_{N+1} = N+1\}} \hat{r}(\hat{X}_{N+1}) + \sum_{n=N+2}^\infty \mathbb{I}_{\{\tau_{N+1} = n\}} \hat{r}(\hat{X}_n) \right] \\
			&= \mathbb{E}_{(x,C)} \left[ \sum_{n=0}^N \mathbb{I}_{\{\tau_{N+1} = n\}} \hat{r}(\hat{X}_n) + \mathbb{I}_{\{\tau_{N+1} >N\}}  \mathbb{I}_{\{\hat{X}_{N+1} \in \hat E_{S \cup K}\}} \hat{r}(\hat{X}_{N+1})\right. \\
			&\quad + \mathbb{I}_{\{\tau_{N+1} >N\}} \mathbb{I}_{\{\hat{X}_{N+1} \notin \hat E_{S \cup K}\}} \mathbb{I}_{\{p(X_{n+1}) \ge \xi_{N+1}^{(1)} \}}  \hat{r}(\hat{X}_{N+1}) \\
			&\quad \left. + \mathbb{I}_{\{\tau_{N+1}> N\}} \mathbb{I}_{\{\hat{X}_{N+1} \notin \hat E_{S \cup K}\}} \mathbb{I}_{\{p(X_{n+1}) < \xi_{N+1}^{(1)} \}} \mathbb{E}_{\hat{X}_{N+2}}\left[ \sum_{n=0}^\infty \mathbb{I}_{\{\tau_{\hat{D} \cup \hat{I} \cup \hat{E}_{S \cup K}} = n\}} \hat{r}(\hat{X}_n) \right] \right] \\
			&= \mathbb{E}_{(x,C)} \left[ \sum_{n=0}^N \mathbb{I}_{\{\tau_{N} = n\}} \hat{r}(\hat{X}_n) + \mathbb{I}_{\{\tau_{N} >N\}}  \mathbb{I}_{\{\hat{X}_{N+1} \in \hat E_{S \cup K}\}} \hat{r}(\hat{X}_{N+1}) \right.  \\
			& \quad + \left. \mathbb{I}_{\{\tau_{N} >N\}} \mathbb{I}_{\{\hat{X}_{N+1} \notin \hat E_{S \cup K}\}}  p(X_{N+1}) \hat{r}(\hat{X}_{N+1}) + \mathbb{I}_{\{\tau_N> N\}} \mathbb{I}_{\{\hat{X}_{N+1} \notin \hat E_{S \cup K}\}}  (1- p(X_{N+1})) \Pi \hat{V}(\hat{X}_{N+1})  \right]. 
		\end{align*}
		Since $p$ satisfies \eqref{eq:optimal-p} we have that $p(x)=1$ implies $\hat{V}(x,C)= \hat{r}(x,C)$, $p(x)\in (0,1)$ implies $\Pi \hat{V}(x,C) = \hat{r}(x,C) = \hat{V}(x,C)$, and $p(x)=0$ implies $\hat{V}(x,C)= \hat{\Pi} \hat{V}(x,C)$.  Hence,  
		\[
		p(x) \hat{r}((x,C)) + (1- p (x)) \Pi \hat{V}(x,C) = \hat{V}(x,C)
		\] for all $x \in E$. Moreover, it holds that 
		$\hat{V}(\hat{x}) = \hat{r}(\hat{x})$ for each $\hat{x} \in \hat E_{S \cup K}$ since these states are absorbing. Putting the parts above together yields
		\begin{align*}
			&\mathbb{E}_{(x,C)} \left[ \hat{r}(\hat{X}_{\tau_{N+1}})\right] \\
			&= \mathbb{E}_{(x,C)} \left[ \sum_{n=0}^N \mathbb{I}_{\{\tau_{N} = n\}} \hat{r}(\hat{X}_n) +\mathbb{I}_{\{\tau_{N} >N\}}  \mathbb{I}_{\{\hat{X}_{N+1} \in \hat E_{S \cup K}\}} \hat{V}(\hat{X}_{N+1}) +  \mathbb{I}_{\{\tau_{N} >N\}} \mathbb{I}_{\{\hat{X}_{N+1} \notin \hat E_{S \cup K}\}} \hat{V}(\hat{X}_{N+1})  \right] \\
			&= \mathbb{E}_{(x,C)} \left[ \sum_{n=0}^N \mathbb{I}_{\{\tau_{N} = n\}} \hat{r}(\hat{X}_n) + \mathbb{I}_{\{\tau_{N} >N\}}  \hat{V}(\hat{X}_{N+1})  \right] \\
			&= \mathbb{E}_{(x,C)} \left[ \sum_{n=0}^N \mathbb{I}_{\{\tau_{N} = n\}} \hat{r}(\hat{X}_n)  + \sum_{n=N+1}^\infty \mathbb{I}_{\{\tau_{N} = n\}} \hat{r}(\hat{X}_n) \right] \\
			&= \mathbb{E}_{(x,C)} \left[ \hat{r}(\hat{X}_{\tau_{N}})\right].
		\end{align*}
		Hence, $
		\mathbb{E}_{(x,C)} \left[ \hat{r}(\hat{X}_{\tau_N}) \right] = \mathbb{E}_{(x,C)} \left[ \hat{r}(\hat{X}_{\tau_0}) \right]$, i.e., each $\tau_N$ is optimal. 
		Now note that  $\tau_N \rightarrow \tau^{(p)}$  a.s., as $N \rightarrow \infty$. 
		Hence, since $|\hat{r}(\hat{X}_{\tau_N})| \leq M$ (cf. the beginning of the proof), we  obtain 
		$
		\mathbb{E}_{(x,C)} \left[ \hat{r}(\hat{X}_{\tau^{(p)}}) \right] = \mathbb{E}_{(x,C)} \left[ \hat{r}(\hat{X}_{\tau_0}) \right],
		$ 
		which shows that also $\tau^{(p)}$ is optimal.

		(B) The states in $\hat E_{S \cup K}$ are absorbing, so that $\mathbb{E}_{(x,C)}\left[ \hat{r}(\hat{X}_{\hat\tau_1}) \right]  = \mathbb{E}_{(x,C)}\left[ \hat{r}\left(\hat{X}_{\hat\tau_1 \wedge \tau_{\hat{E}_{S \cup K}}}\right) \right]$, for $\hat \tau_1 \in \hat{\mathcal{T}}_1$. 
		Hence, it suffices to consider stopping times of the form $\hat{\tau_1} \wedge  \tau_{\hat E_{S \cup K}}$ in the supremum in the right-hand side of \eqref{eq:value_original_vs_aux}. 
		Moreover, for $\tau_1 \in \mathcal{T}_1$, it is directly seen that $\tau_1 \wedge \tau_{\hat{E}_{S \cup K}} \in \hat{\mathcal{T}_1}$. 
		Using that as long as $\hat{X}_n$ is not absorbed we have $\hat{X}_n = (X_n,C)$, we see that we can for any $\hat{\tau}_1  \in \hat{\mathcal{T}_1}$ find a stopping time $\tau_1 \in \mathcal{T}_1$ such that $\tau_1 \wedge \tau_{\hat E_{S \cup K}} = \hat{\tau}_1 \wedge \tau_{\hat E_{S \cup K}}$. 
		Hence, it actually suffices to consider stopping times $\tau_1 \wedge \tau_{\hat E_{S \cup K}}$ with $\tau_1 \in \mathcal{T}_1$ in the supremum in the right-hand side of \eqref{eq:value_original_vs_aux}. In the following we therefore prove that 
		\begin{align}
			\label{eq:proofSameReward}
			J_1\left(x;\tau_1, \tau^{p^{(2)}}\right)
			= \mathbb{E}_{(x,C)} \left[ \hat{r}\left(\hat{X}_{\tau_1 \wedge \tau_{\hat E_{S \cup K}}}\right) \right]
		\end{align}
		for all $x \in E$ and all stopping times $\tau_1 \in \mathcal{T}_1$. This in turn implies that for any starting value $x \in E$, 
		a stopping time is optimal in \eqref{sec3:optstop1} if and only if it is also optimal for the stopping problem \eqref{sec3:optstop2} for the starting value $(x,C)$ and the result follows.

		Note, in order to prove \eqref{eq:proofSameReward} it suffices to show, for all $n \in \mathbb{N}_0$ and $x \in E$, that 
		\begin{align}\label{eq:proofSameReward-2}
			\begin{split}
				& \mathbb{E}_x\left[\alpha^n f_1(X_n) \mathbb{I}_{\{n < \tau_2\}} + \alpha^{\tau_2} g_1(X_{\tau_2}) \mathbb{I}_{\{\tau_2 < n\}} + \alpha^n h_1(X_{n}) \mathbb{I}_{\{n= \tau_2\}} | X_1, \ldots, X_n \right]\\
				&=\mathbb{E}_{(x,C)}\left[ \hat{r}(\hat{X}_n) |X_1, \ldots, X_n \right],
			\end{split}
		\end{align}
where we have written $\tau_2= \tau^{p^{(2)}}$. It is directly seen that
		\begin{align*}
			\mathbb{E}_{(x,C)}\left[ \hat{r}((x,C))\right] &= (1-p^{(2)}(x)) f_1(x) + p^{(2)}(x) h_1(x) \\
			&= \mathbb{E}_x\left[ \mathbb{I}_{\{p_2(x) < \xi_0^{(2)}\}} f_1(x) + \mathbb{I}_{\{p_2(x) \ge \xi_0^{(2)}\}} h_1(x)\right] \\
			&= \mathbb{E}_x\left[ f_1(x)\mathbb{I}_{\{0<\tau_2\}}  + h_1(x) \mathbb{I}_{\{\tau_2=0\}} \right],
		\end{align*}
		which is \eqref{eq:proofSameReward-2} for $n=0$ (to see this recall e.g., \eqref{r-reward} and that $\hat{X}_0=(x,C)$ and $X_0=x$). 
		For $n \ge 1$, it can be verified -- using e.g., the definitions of $\hat X$ and $\tilde X$, as well as \eqref{eq:M-rand-stop-time} and \eqref{r-reward} -- that 
		\begin{align}
			\notag
			&\mathbb{E}_{(x,C)}[ \hat{r} (\hat{X}_n)|X_1, \ldots, X_n] \\ \notag
			&= \sum_{i=1}^n \left( \prod_{j=0}^{i-2} (1-p^{(2)}(X_j)) \right) p^{(2)}(X_{i-1}) \alpha^{i-1} g_1(X_{i-1}) \\ \notag
			&\quad  + \left( \prod_{j=0}^{n-1} (1- p^{(2)}(X_j)) \right) \alpha^n \left( (1-p^{(2)}(X_n)) f_1(X_n) + p^{(2)}(X_n) h_1(X_n) \right) \\
			\label{eq:RelationRewardGameStopping}
			\begin{split}
				&= \sum_{i=1}^n \left( \prod_{j=0}^{i-2} (1-p^{(2)}(X_j)) \right) p^{(2)}(X_{i-1}) \alpha^{i-1} g_1(X_{i-1}) \\
				&\quad + \left( \prod_{j=0}^{n} (1- p^{(2)}(X_j)) \right) \alpha^n f_1(X_n)  + \left( \prod_{j=0}^{n-1} (1- p^{(2)}(X_j)) \right) p^{(2)}(X_n) \alpha^n h_1(X_n) 
			\end{split} \\ \notag
			&= \mathbb{E}_x \left[ \mathbb{I}_{\{\tau_2 <n\}} \alpha^{\tau_2} g_1(X_{\tau_2}) + \mathbb{I}_{\{\tau_2 >n\}} \alpha^n f_1(X_n) + \mathbb{I}_{\{\tau_2 = n\}} \alpha^n h_1(X_n) \Big| X_1, \ldots, X_n \right], 
		\end{align}
		which means that \eqref{eq:proofSameReward-2} holds also for $n \geq 1$.
	\end{proof}
	We are now in a position to prove Proposition~\ref{thm:CharacterizationValues}.

	\begin{proof}[Proof of Proposition~\ref{thm:CharacterizationValues}]
		It follows from Proposition~\ref{thm:characterization_optimal_strategies}(Aiii)--(B) that $\text{BR}^{(1)}\left(p^{(2)}\right)$ is the set of all measurable functions
		$p:E \rightarrow [0,1]$ that satisfy \eqref{eq:optimal-p}. 
		By inspection of \eqref{eq:optimal-p} we find that this set is non-empty, closed and convex.
	\end{proof}

	\section{Verification and Characterization for the General Game Formulation}\label{sec:ver-thm}
	Suppose $\left(p^{(1)},p^{(2)}\right)$ is a global Markovian (randomized) equilibrium.  The corresponding (equilibrium) values are defined as the function pair $\left(V^{(1)}, V^{(2)}\right)$ with
	\[
	V^{(i)}(x):= J_i\left(x,\tau^{p^{(1)}}, \tau^{p^{(2)}}\right), \enskip x\in E.
	\] 
	In this section we will characterize $\left(p^{(1)},p^{(2)}\right)$ and $\left(V^{(1)}, V^{(2)}\right)$ as a solution to the system:
	\begin{subequations}
		\label{eq:verification_all}
		\begin{align}
			\label{eq:verification_value1}
			V^{(1)}(x) &= \max \left\{ 
			(1-p^{(2)}(x)) \alpha \Pi V^{(1)}(x) + p^{(2)}(x) g_1(x), 
			(1-p^{(2)}(x)) f_1(x) + p^{(2)}(x) h_1(x)
			\right\} \\
			\label{eq:verification_value2}
			V^{(2)}(x) &=\max \left\{ 
			(1-p^{(1)}(x)) \alpha \Pi V^{(2)}(x) + p^{(1)}(x) g_2(x), 
			(1-p^{(1)}(x)) f_2(x) + p^{(1)}(x) h_2(x)
			\right\}\\
			\label{eq:verification_stop1}
			p^{(1)}(x)&>0 \Rightarrow (1-p^{(2)}(x)) \alpha \Pi V^{(1)}(x) + p^{(2)}(x) g_1(x) \le  (1-p^{(2)}(x)) f_1(x) + p^{(2)}(x) h_1(x) \\
			\label{eq:verification_cont1}
			p^{(1)}(x)&<1 \Rightarrow (1-p^{(2)}(x)) \alpha \Pi V^{(1)}(x) + p^{(2)}(x) g_1(x) \ge  (1-p^{(2)}(x)) f_1(x) + p^{(2)}(x) h_1(x) \\
			\label{eq:verification_stop2}
			p^{(2)}(x)&>0 \Rightarrow (1-p^{(1)}(x)) \alpha \Pi V^{(2)}(x) + p^{(1)}(x) g_2(x) \le (1-p^{(1)}(x)) f_2(x) + p^{(1)}(x) h_2(x) \\
			\label{eq:verification_cont2}
			p^{(2)}(x)&<1 \Rightarrow (1-p^{(1)}(x)) \alpha \Pi V^{(2)}(x) + p^{(1)}(x) g_2(x) \ge (1-p^{(1)}(x)) f_2(x) + p^{(1)}(x) h_2(x)
		\end{align}
	\end{subequations}
	\begin{remark} Let us interpret the system \eqref{eq:verification_all} from the view-point of player $1$. The interpretation for player $2$ is analogous. 
		The interpretation of \eqref{eq:verification_value1} is that the equilibrium value is equal to the maximum of the (expected) values corresponding to 
		stopping with probability zero (first part of the maximum function) and
		stopping with probability one (second part of the maximum function). 
		The interpretation of \eqref{eq:verification_stop1} is that if we assign a positive probability to stopping then the value of stopping must dominate (at least weakly) that of continuing. The interpretation of \eqref{eq:verification_cont1} is analogous.
		Note that \eqref{eq:verification_stop1} and \eqref{eq:verification_cont1} imply that
		\begin{align*}
			(1-p^{(2)}(x)) \alpha \Pi V^{(1)}(x) + p^{(2)}(x) g_1(x) =  (1-p^{(2)}(x)) f_1(x) + p^{(2)}(x) h_1(x)
		\end{align*}
		whenever $p^{(1)}(x) \in (0,1)$. This corresponds to the game theory indifference principle, which for our game means that a player will randomize in equilibrium at a given state $x$ only if the player is indifferent between stopping and not stopping at that state. 
		\defEnd
	\end{remark}

	\begin{theorem}[Equilibrium characterization and verification]
		\label{thm:veri}
		Let $p^{(1)},p^{(2)}: E \rightarrow [0,1]$, $V^{(1)}, V^{(2)}: E \rightarrow \mathbb{R}$ be measurable functions. 
		(A): Suppose $\left(p^{(1)}, p^{(2)}\right)$ is a global Markovian randomized equilibrium with values $\left(V^{(1)}, V^{(2)}\right)$. 
		Then these functions satisfy the system \eqref{eq:verification_all}. 
		(B): Suppose $p^{(1)},p^{(2)}, V^{(1)}$, and $V^{(2)}$ satisfy the system \eqref{eq:verification_all} as well as
		\begin{equation}
			\label{eq:veri_int}
			V^{(i)}(\tilde{X}_1) \in L^1	\enskip \text{and}\enskip 	\enskip \mathbb{E}_{x} \left[\sup_{n \in \mathbb{N}_0} V^{(i)}(\tilde{X}_n) \right] < \infty, \enskip x \in E, \quad i=1,2.
		\end{equation}
		Then $\left(p^{(1)}, p^{(2)}\right)$ is a global Markovian randomized equilibrium with values $\left(V^{(1)}, V^{(2)}\right)$.
	\end{theorem}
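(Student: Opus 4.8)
The plan rests on one structural observation: the system \eqref{eq:verification_all}, read one player at a time, is nothing but the Wald--Bellman equation \eqref{eq:OptimalityEquation} together with the optimal-strategy characterization \eqref{eq:optimal-p} of Proposition~\ref{thm:characterization_optimal_strategies} for the auxiliary problem \eqref{sec3:optstop2} associated with the opponent's strategy, under the identification $V^{(1)}(x)=\hat V(x,C)$ (and symmetrically for player~$2$, with $p^{(1)}$ the fixed strategy). Indeed, inserting the kernel \eqref{markov-kernel} and using that $(x,S)$ and $K$ are absorbing, so $\hat V(x,S)=g_1(x)$ and $\hat V(K)=0$, one gets
\[
\hat\Pi\hat V(x,C)=\big(1-p^{(2)}(x)\big)\alpha\,\Pi V^{(1)}(x)+p^{(2)}(x)\,g_1(x),
\]
which is exactly the first entry of the maximum in \eqref{eq:verification_value1}, while $\hat r(x,C)$ is the second entry. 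Hence \eqref{eq:OptimalityEquation} at $(x,C)$ is \eqref{eq:verification_value1}; and since $(x,C)\in\hat D\cup\hat I$ means $\hat\Pi\hat V(x,C)\le\hat r(x,C)$ and $(x,C)\notin\hat D$ means $\hat\Pi\hat V(x,C)\ge\hat r(x,C)$, the three clauses of \eqref{eq:optimal-p} in contrapositive form become the implications \eqref{eq:verification_stop1}--\eqref{eq:verification_cont1}.

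Part~(A) is then immediate. If $(p^{(1)},p^{(2)})$ is a global Markovian randomized equilibrium with values $(V^{(1)},V^{(2)})$, Definition~\ref{def:NE} says $\tau^{p^{(1)}}$ is optimal for \eqref{sec3:optstop1} (with $p^{(2)}$ fixed) for every $x\in E$. Proposition~\ref{thm:characterization_optimal_strategies}(B) gives $V^{(1)}(x)=\sup_{\tau_1\in\mathcal T_1}J_1(x;\tau_1,\tau^{p^{(2)}})=\hat V(x,C)$ for all $x$ and that $\hat\tau^{(p^{(1)})}$ is optimal for \eqref{sec3:optstop2} at every $(x,C)$; Proposition~\ref{thm:characterization_optimal_strategies}(Aiii) then forces $p^{(1)}$ to satisfy \eqref{eq:optimal-p}. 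Feeding these two facts through the translation above yields \eqref{eq:verification_value1} and \eqref{eq:verification_stop1}--\eqref{eq:verification_cont1}, and the same argument with the players' roles interchanged yields \eqref{eq:verification_value2} and \eqref{eq:verification_stop2}--\eqref{eq:verification_cont2}.

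For part~(B) I fix $p^{(2)}$, let $\hat V$ be the value of \eqref{sec3:optstop2} (which exists by Proposition~\ref{thm:characterization_optimal_strategies}(Ai)), and define $W$ on $\hat E$ by $W(x,C):=V^{(1)}(x)$, $W(x,S):=g_1(x)$, $W(K):=0$. The translation above shows $W=\max\{\hat\Pi W,\hat r\}$, so $W$ is a superharmonic majorant of $\hat r$; in particular $W(\hat X_n)\ge\hat r(\hat X_n)\ge-M$, and since $W(\hat X_n)$ is $V^{(1)}$, $g_1$ or $0$ evaluated at a state visited before absorption, \eqref{eq:veri_int} and Assumption~\ref{assum:bounded-functions} give $\sup_n|W(\hat X_n)|\in L^1$. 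Thus $(W(\hat X_n))_n$ is a uniformly integrable supermartingale and optional stopping at any a.s.\ finite $\tau\in\hat{\mathcal T}_1$ gives $W(\hat x)\ge\mathbb E_{\hat x}[\hat r(\hat X_\tau)]$, hence $W\ge\hat V$. For the reverse inequality I show $\hat\tau:=\hat\tau^{(p^{(1)})}$ of \eqref{eq:admiss-for-alt-prob} attains $W$: on $\{n<\hat\tau\}$ we have $\hat X_n=(X_n,C)$ and $p^{(1)}(X_n)<\xi^{(1)}_n<1$, so \eqref{eq:verification_cont1} forces the continuation term to equal the maximum in \eqref{eq:verification_value1}, i.e.\ $\hat\Pi W(\hat X_n)=W(\hat X_n)$, from which the stopped process $(W(\hat X_{n\wedge\hat\tau}))_n$ is a uniformly integrable martingale; and at time $\hat\tau$ either $\hat X$ is already absorbed, where $W=\hat r$ by construction, or stopping was triggered by the randomization device, so $p^{(1)}(X_{\hat\tau})\ge\xi^{(1)}_{\hat\tau}>0$ and \eqref{eq:verification_stop1} forces the stopping term to equal the maximum, giving again $W(\hat X_{\hat\tau})=\hat r(\hat X_{\hat\tau})$. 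Passing to the limit, $W(x,C)=\mathbb E_{(x,C)}[\hat r(\hat X_{\hat\tau})]\le\hat V(x,C)$, so $W=\hat V$ and $\hat\tau$ is optimal for \eqref{sec3:optstop2} at every $(x,C)$. By Proposition~\ref{thm:characterization_optimal_strategies}(B), $\tau^{p^{(1)}}$ is then optimal for \eqref{sec3:optstop1} and $J_1(x;\tau^{p^{(1)}},\tau^{p^{(2)}})=\hat V(x,C)=V^{(1)}(x)=\sup_{\tau_1\in\mathcal T_1}J_1(x;\tau_1,\tau^{p^{(2)}})$ for all $x$; the symmetric argument with $p^{(1)}$ fixed gives the second equilibrium identity, so $(p^{(1)},p^{(2)})$ is a global Markovian randomized equilibrium with values $(V^{(1)},V^{(2)})$.

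The main obstacle is the reverse inequality in part~(B): the identity \eqref{eq:verification_value1} alone does not characterize $\hat V$ (e.g.\ nonzero constants may solve an equation of that form), so one genuinely has to exhibit an optimal stopping time, and the martingale computation along $\hat\tau^{(p^{(1)})}$ must be carried out carefully because this stopping time depends on the device $\xi^{(1)}$, which is not adapted to the natural filtration of $\hat X$, and because the absorbing and cemetery states of $\hat X$ must be tracked separately. The accompanying integrability bookkeeping, namely $\sup_n|W(\hat X_n)|\in L^1$ so that the relevant (super)martingales are uniformly integrable, is where \eqref{eq:veri_int} and Assumption~\ref{assum:bounded-functions} are used, but it is routine.
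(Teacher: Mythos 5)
Your proposal is correct, and its overall architecture coincides with the paper's: both parts hinge on translating the system \eqref{eq:verification_all} into the Wald--Bellman equation \eqref{eq:OptimalityEquation} and the optimality characterization \eqref{eq:optimal-p} for the auxiliary problem \eqref{sec3:optstop2} via the identities \eqref{eq:veri_aux1}--\eqref{eq:veri_aux2}, and your part (A) is carried out exactly as in the paper. The only genuine divergence is in the second half of part (B). The paper, having checked that your $W$ (its $\hat V$ defined by \eqref{eq:veri_vhat2:kl}) solves the Wald--Bellman equation, is suitably integrable, and satisfies $\lim_{n}W(\hat X_n)=\lim_{n}\hat r(\hat X_n)$ a.s.\ (by absorption in finite time), identifies $W$ with the value of \eqref{sec3:optstop2} by citing \cite[Theorem 1.13]{peskir2006optimal}, and then separately deduces optimality of $p^{(1)}$ by showing it satisfies \eqref{eq:optimal-p} and invoking Proposition~\ref{thm:characterization_optimal_strategies}(Aiii), whose ``if'' direction the paper proves by the $\tau_N$-induction. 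You instead run a self-contained verification: the supermartingale property of $\left(W(\hat X_n)\right)_{n}$ plus optional stopping gives $W\ge\hat V$, while the martingale property of the process stopped at $\hat\tau^{(p^{(1)})}$ (using \eqref{eq:verification_cont1} on the continuation event and \eqref{eq:verification_stop1} or absorption at the stopping time) gives $W\le\hat V$ and the optimality of $\hat\tau^{(p^{(1)})}$ in one stroke. This buys a citation-free argument establishing your items (i) and (ii) simultaneously, at the price of having to formulate the (super)martingale statements and optional stopping with respect to the enlarged filtration $\sigma\left(\hat X_0,\dots,\hat X_n,\xi^{(1)}_{0},\dots,\xi^{(1)}_{n}\right)$ -- a point you correctly flag; since $\xi^{(1)}$ is independent of $\hat X$, the one-step transition of $W(\hat X_\cdot)$ under this filtration is still governed by $\hat\Pi$, so the argument goes through. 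The integrability bookkeeping ($\sup_{n}|W(\hat X_n)|\in L^1$ from \eqref{eq:veri_int}, Assumption~\ref{assum:bounded-functions} and the lower bound $W\ge\hat r\ge -M$) matches the paper's.
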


	\begin{proof}

		(A) The analysis in Section~\ref{sec:Optimization} was without loss of generality performed from the view-point of player $1$. 
		Based on this analysis we will show that $\left(p^{(1)}, p^{(2)}\right)$ and $V^{(1)}$ satisfy \eqref{eq:verification_value1}, \eqref{eq:verification_stop1} and \eqref{eq:verification_cont1}. Analogous arguments can be used for \eqref{eq:verification_value2}, \eqref{eq:verification_stop2} and \eqref{eq:verification_cont2}. We set $V= V^{(1)}$ in this proof. By definition of equilibrium we have that 
		\begin{align}\label{eq:veri_aux00KL}
			V(x)
			& = J_1\left(x;\tau^{p^{(1)}}, \tau^{p^{(2)}}\right) =\sup_{\tau_1\in {\cal T}_1} J_1\left(x;\tau_1, \tau^{p^{(2)}}\right).
		\end{align}
		The stopping problems \eqref{sec3:optstop1} and \eqref{sec3:optstop2} are equivalent (in the sense of Proposition~\ref{thm:characterization_optimal_strategies}) and the states $(x,S),x \in E$, and $K$ are absorbing. 
		Hence,
		\begin{equation}
			\label{eq:veri_vhat}
			\hat{V}(\hat{x}) = \begin{cases}
				V(x) &\text{if } \hat{x} = (x,C) \\
				g_1(x) &\text{if } \hat{x} = (x,S) \\
				0 &\text{if } \hat{x} = K
			\end{cases}.
		\end{equation}
		(Recall that $\hat{V}$ is defined in \eqref{sec3:optstop2}, which depends on $p^{(2)}$ through \eqref{markov-kernel} and \eqref{r-reward}.) 
		Using  the relation between $\hat \Pi $ and $\Pi$ (cf. \eqref{markov-kernel}), 
		the definition of $\hat{r}$ (cf. \eqref{r-reward})
		and \eqref{eq:veri_vhat} we obtain 
		\begin{align}	
			\hat{\Pi} \hat{V}(x,C) & = (1-p^{(2)}(x)) \alpha \Pi V(x) + p^{(2)}(x)g_1(x)   \label{eq:veri_aux1}\\
			\hat r(x,C) &  = (1-p^{(2)}(x)) f_1(x) + p^{(2)}(x) h_1(x).\label{eq:veri_aux2}
		\end{align}
		Using the above and that
		$\hat{V}$ satisfies \eqref{eq:OptimalityEquation} (in particular for each $\hat x = (x,C)$) we find 
		\begin{align*}
			V(x) 
			& = \hat{V}(x,C)\\ 
			& = \max\left\{ \hat \Pi \hat{V}(x,C), \hat{r}(x,C)\right\}\\
			& = \max\left\{(1-p^{(2)}(x)) \alpha \Pi V(x) + p^{(2)}(x)g_1(x), (1-p^{(2)}(x)) f_1(x) + p^{(2)}(x) h_1(x)\right\}.
		\end{align*}
		Hence \eqref{eq:verification_value1} holds. 
		By assumption $\left(p^{(1)},p^{(2)}\right)$ is an equilibrium and it hence holds that $p^{(1)}$ is optimal in the sense of \eqref{eq:veri_aux00KL}. 
		Hence $p^{(1)}$ is also optimal in \eqref{sec3:optstop2} (in the sense of Proposition~\ref{thm:characterization_optimal_strategies}(B)). 
		Consider $x\in E$ with $p^{(1)}(x)>0$, then $(x,C) \in \hat D \cup \hat I$ (by Proposition~\ref{thm:characterization_optimal_strategies}(Aiii)) which implies (by \eqref{eq:indiff-cont-sets}) that $\hat{\Pi} \hat{V}(x,C) \le \hat r(x,C)$. 
		This implies (using \eqref{eq:veri_aux1}--\eqref{eq:veri_aux2}) that \eqref{eq:verification_stop1} holds. 
		\eqref{eq:verification_cont1} can be verified using similar arguments.  
		
		(B) The functions $p^{(1)},p^{(2)}, V^{(1)}$ and $V^{(2)}$ satisfy \eqref{eq:verification_all} by assumption. 
		With this in mind we will below use the notation of Section~\ref{sec:Optimization} to show 
		(i) that $V^{1}$ is equal to the supremum $V$ in \eqref{eq:veri_aux00KL}
		and 
		(ii)
		that this supremum is attained by $p^{(1)}$, for any $x\in E$. 
		The analogous statement can be proved for $V^{2}$ and $p^{(2)}$. 
		By definition of global Markovian randomized equilibrium this concludes the proof of (B).
		To this end we here \emph{define} the function $\hat{V}$ according to 
		\begin{equation} \label{eq:veri_vhat2:kl}
			\hat{V}(\hat{x}):= \begin{cases}
				V^{(1)}(x) &\text{if } \hat{x} = (x,C) \\
				g_1(x) &\text{if } \hat{x} = (x,S) \\
				0 &\text{if } \hat{x} = K
			\end{cases}.
		\end{equation}
		Let us first prove that $\hat V$ solves the Wald-Bellman equation \eqref{eq:OptimalityEquation} for all $x \in \hat{E}$. 
		For all absorbed states $\hat x = (x,S)$ and $\hat x = K$, this follows immediately from the definition \eqref{eq:veri_vhat2:kl} and \eqref{r-reward}.
		For $x \in E$ we note, again by the definition \eqref{eq:veri_vhat2:kl}, that $\hat{V}$ also satisfies  \eqref{eq:veri_aux1}. 
		Thus, combining \eqref{eq:veri_aux1} and \eqref{eq:veri_aux2}, and using that $V^{(1)}$ satisfies 
		\eqref{eq:verification_value1} we immediately see that $\hat V$ solves the Wald-Bellman equation also for $\hat x = (x,C)$.
		Moreover, by definition of $\hat{V}$, 
		Assumption~\ref{assum:bounded-functions} and \eqref{eq:veri_int} we find that
		\begin{align*}
			\mathbb{E}_{\hat{x}} \left[ \sup_{n \in \mathbb{N}_0} \hat{V}(\hat{X}_n) \right] 
			&\le \mathbb{E}_{\hat{x}} \left[ \sup_{n \in \mathbb{N}_0} \max \left\{ V^{(1)}(\tilde{X}_n), g_1(\tilde{X}_n), 0 \right\} \right]  < \infty.
		\end{align*} 
		It can be similarly shown that $\hat{V}(\hat{X}_1) \in L^1$.
		Finally, using that $\hat{X}$ is absorbed in finite time a.s. and that $\hat{V}(\hat{x})=\hat{r}(\hat{x})$ for all absorbing states $\hat{x} \in \hat E_{S \cup K}$ we obtain
		$\lim_{n \rightarrow \infty} \hat{V}(\hat{X}_n) = \lim_{n \rightarrow \infty} \hat{r}(\hat{X}_n)$ a.s. All these observations lead us to conclude 
		-- based on standard stopping theory, see, e.g., \cite[Theorem 1.13]{peskir2006optimal} --
		that $\hat V$ is equal to the optimal value in \eqref{sec3:optstop2}, which in turn implies that item (i) above is true 
		(to see this use \eqref{eq:veri_vhat2:kl} and Proposition~\ref{thm:characterization_optimal_strategies}(B)).

		All we have left is to prove item (ii) above.
		First note (using \eqref{eq:verification_stop1}--\eqref{eq:verification_cont1} and \eqref{eq:veri_aux1}--\eqref{eq:veri_aux2}) that $p^{(1)}$ satisfies: 
		$p^{(1)}(x) > 0 \Rightarrow \hat \Pi \hat{V}(x,C)  {\leq} \hat r(x,C) \Rightarrow x \in \hat D \cup \hat I$ and 
		$p^{(1)}(x) < 1 \Rightarrow \hat \Pi  \hat{V}(x,C)  {\geq} \hat r(x,C) \Rightarrow x \notin \hat D$ (recall \eqref{eq:indiff-cont-sets}). 
		But this in turn implies that $p^{(1)}$ and $\hat V$ satisfies \eqref{eq:optimal-p}. It hence follows (cf. Proposition~\ref{thm:characterization_optimal_strategies}(Aiii)-(B))  that (ii) is true.  
	\end{proof}

	\section{Results for Zero-Sum Games}\label{sec:zero-sum}
	In this section we study the zero-sum game specification.  
	The main results are as follows.  
    In Section~\ref{sec:zero_constr}, we show that any zero-sum game satisfying our integrability assumption admits a global Markovian equilibrium, and we provide an explicit construction of such an equilibrium.
	In Section~\ref{sec:zero_random} we obtain both sufficient and necessary conditions for the non-existence of a global Markovian pure equilibrium.
	In Section~\ref{sec:zerosum-randomNE-no-mix-example}, we present an explicit example in which a global
	Markovian randomized equilibrium but no global
	Markovian pure equilibrium exists.

	We obtain the usual zero-sum game in our setting in case the payoff functions satisfy
	\[
	f_1=f, \enskip 
	g_1 = g, \enskip 
	h_1 = h, \enskip 
	f_2 = -g, \enskip 
	g_2=-f, \enskip 
	h_2 = -h,
	\]
	with $f,g,h: E \rightarrow \mathbb{R}$. This implies that the reward of one player is the loss of the other player and it is easily seen that the game can be viewed as follows: player $1$ chooses a strategy $\tau_1$ to maximize the expected value
	\begin{equation*}
		\mathbb{E}_x \left[ \alpha^{\tau_1} f(X_{\tau_1}) \mathbb{I}_{\{\tau_1<\tau_2\}} + \alpha^{\tau_2} g(X_{\tau_2}) \mathbb{I}_{\{\tau_2 < \tau_1\}} + \alpha^{\tau_1} h(X_{\tau_1}) \mathbb{I}_{\{\tau_1 = \tau_2 < \infty\}} \right],
	\end{equation*} 
	while player $2$ chooses a strategy $\tau_2$ to \emph{minimize} it. 
	Given a global Markovian (possibly randomized) equilibrium  $\left(p^{(1)}, p^{(2)}\right)$, and writing $\tau_i = \tau^{p^{(i)}},i=1,2$, we identify 
	the equilibrium value with $V: E \rightarrow \mathbb{R}$ defined by
	\begin{align}
		\label{eq:zero_value_fct}
		V(x)  = \mathbb{E}_x \left[ \alpha^{\tau_1} f(X_{\tau_1}) \mathbb{I}_{\{\tau_1<\tau_2\}} + \alpha^{\tau_2} g(X_{\tau_2}) \mathbb{I}_{\{\tau_2 < \tau_1\}} + \alpha^{\tau_1} h(X_{\tau_1}) \mathbb{I}_{\{\tau_1 = \tau_2 < \infty\}} \right].
	\end{align} 
(In the terminology of Section~\ref{sec:ver-thm}, $V$ is the value for player $1$ whereas $-V$ is the value for player $2$.)
	For this game we immediately find that Theorem~\ref{thm:veri} corresponds to:
	\begin{corollary}[Equilibrium characterization and verification]
		\label{cor:veri_zero_sum}
		\begin{subequations}
			Let $p^{(1)},p^{(2)}: E \rightarrow [0,1]$ and $V: E \rightarrow \mathbb{R}$ be measurable functions. 
			(A): Suppose   $\left(p^{(1)}, p^{(2)}\right)$  is a global Markovian randomized equilibrium with value $V$. Then these functions satisfy the system:
			\label{eq:Zero_All_Conditions}
			\begin{align}
				\label{eq:Zero_v_Maxi}
				V(x) &= \max \left\{(1-p^{(2)}(x)) \alpha \Pi V(x) + p^{(2)}(x) g(x), (1- p^{(2)}(x)) f(x) + p^{(2)}(x) h(x) \right\} \\
				\label{eq:Zero_v_Mini}
				V(x) &= \min \left\{(1- p^{(1)}(x)) \alpha \Pi V(x) + p^{(1)}(x) f(x), (1-p^{(1)}(x)) g(x) + p^{(1)}(x) h(x)\right\}\\
				\label{eq:Zero_Maxi_Stop}
				p^{(1)}(x) &>0 \Rightarrow (1-p^{(2)}(x)) \alpha \Pi V(x) + p^{(2)}(x) g(x)  \le (1- p^{(2)}(x)) f(x) + p^{(2)}(x) h(x) \\
				\label{eq:Zero_Maxi_Cont}
				p^{(1)}(x) &<1 \Rightarrow (1-p^{(2)}(x)) \alpha \Pi V(x) + p^{(2)}(x) g(x) \ge (1- p^{(2)}(x)) f(x) + p^{(2)}(x) h(x)  \\
				\label{eq:Zero_Mini_Stop}
				p^{(2)}(x) &>0 \Rightarrow	(1- p^{(1)}(x)) \alpha \Pi V(x) + p^{(1)}(x) f(x)	\ge (1-p^{(1)}(x)) g(x) + p^{(1)}(x) h(x)  \\
				\label{eq:Zero_Mini_Cont}
				p^{(2)}(x) &<1 \Rightarrow	(1- p^{(1)}(x)) \alpha \Pi V(x) + p^{(1)}(x) f(x) \le (1-p^{(1)}(x)) g(x) + p^{(1)}(x) h(x). 		
			\end{align}
		\end{subequations}
		(B): Suppose $p^{(1)},p^{(2)}$ and $V$ satisfy \eqref{eq:Zero_All_Conditions} 
		and that $\sup_{n \in \mathbb{N}_0} |V(\tilde{X}_n)| \in L^1$.
		Then $\left(p^{(1)},p^{(2)}\right)$ is a global Markovian randomized equilibrium with value $V$.
	\end{corollary}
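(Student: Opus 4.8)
The plan is to deduce this corollary directly from Theorem~\ref{thm:veri} by substituting the zero-sum payoff structure and translating player~2's conditions into statements about the common value $V$. First I would record the identifications $f_1 = f$, $g_1 = g$, $h_1 = h$, $f_2 = -g$, $g_2 = -f$, $h_2 = -h$, and observe that if $\left(p^{(1)}, p^{(2)}\right)$ is a global Markovian randomized equilibrium with values $\left(V^{(1)}, V^{(2)}\right)$ in the sense of Section~\ref{sec:ver-thm}, then by the zero-sum structure of the rewards $J_i$ we have $V^{(2)} = -V^{(1)} =: -V$. Indeed, $J_2(x;\tau_1,\tau_2) = -J_1(x;\tau_1,\tau_2)$ pointwise, so the equilibrium values are negatives of each other; this is the only genuinely ``new'' observation needed, and it is immediate.

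For part~(A), I would start from the system \eqref{eq:verification_all} applied to $\left(p^{(1)}, p^{(2)}, V^{(1)}, V^{(2)}\right) = \left(p^{(1)}, p^{(2)}, V, -V\right)$. Equation \eqref{eq:verification_value1} becomes \eqref{eq:Zero_v_Maxi} verbatim. For \eqref{eq:verification_value2}, substituting $V^{(2)} = -V$, $f_2 = -g$, $g_2 = -f$, $h_2 = -h$ gives
\[
-V(x) = \max\left\{ -(1-p^{(1)}(x))\alpha \Pi V(x) - p^{(1)}(x) f(x),\ -(1-p^{(1)}(x)) g(x) - p^{(1)}(x) h(x) \right\},
\]
and multiplying by $-1$ (which turns the $\max$ into a $\min$) yields exactly \eqref{eq:Zero_v_Mini}. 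The four implications \eqref{eq:verification_stop2}--\eqref{eq:verification_cont2} transform the same way: each inequality between the ``continue'' and ``stop'' expressions for player~2, after the substitutions and multiplication by $-1$, reverses direction and becomes \eqref{eq:Zero_Mini_Stop}--\eqref{eq:Zero_Mini_Cont}; the implications \eqref{eq:verification_stop1}--\eqref{eq:verification_cont1} are unchanged and give \eqref{eq:Zero_Maxi_Stop}--\eqref{eq:Zero_Maxi_Cont}.

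For part~(B), I would run the argument in reverse: given measurable $p^{(1)}, p^{(2)}, V$ satisfying \eqref{eq:Zero_All_Conditions} with $\sup_n |V(\tilde X_n)| \in L^1$, set $V^{(1)} := V$ and $V^{(2)} := -V$ and check that the hypotheses of Theorem~\ref{thm:veri}(B) hold. The system \eqref{eq:verification_all} follows by undoing the algebraic manipulations above (the $\min$/$-1$ rewriting is reversible). The integrability requirement \eqref{eq:veri_int} for $i=1$ is exactly the assumed condition on $V$, and for $i=2$ it follows since $|V^{(2)}(\tilde X_n)| = |V(\tilde X_n)|$, so $\sup_n |V^{(2)}(\tilde X_n)| = \sup_n |V(\tilde X_n)| \in L^1$ and likewise $V^{(2)}(\tilde X_1) = -V(\tilde X_1) \in L^1$. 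Theorem~\ref{thm:veri}(B) then gives that $\left(p^{(1)}, p^{(2)}\right)$ is a global Markovian randomized equilibrium with values $\left(V, -V\right)$, which by our identification of the zero-sum value is the assertion. There is no real obstacle here; the only point requiring a modicum of care is bookkeeping the sign flips and the $\max \leftrightarrow \min$ interchange in the value equation and the four stopping/continuation implications, and verifying that the single scalar integrability hypothesis on $V$ simultaneously covers both $V^{(1)}$ and $V^{(2)}$.
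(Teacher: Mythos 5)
Your proposal is correct and matches the paper's approach exactly: the paper treats this corollary as an immediate consequence of Theorem~\ref{thm:veri} obtained by substituting $f_1=f$, $g_1=g$, $h_1=h$, $f_2=-g$, $g_2=-f$, $h_2=-h$, using $J_2=-J_1$ to get $V^{(2)}=-V^{(1)}=-V$, and flipping the player-2 $\max$ and inequalities into the $\min$ and reversed inequalities of \eqref{eq:Zero_v_Mini}--\eqref{eq:Zero_Mini_Cont}. Your observation that the single hypothesis $\sup_{n}|V(\tilde X_n)|\in L^1$ covers \eqref{eq:veri_int} for both players is also the intended reading.
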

	
	\begin{remark}
		\label{rem:value}
		Whenever an equilibrium exists, the definition of a value $V$ coincides, as expected, with the classical definition of a value in the theory of zero-sum games (as in e.g., \cite{OhtsuboTerminating,RosenbergSolanVieilleZeroSum}). In  this literature, the lower and upper value are defined as 
		\[
			\underline{V}(x) := \sup_{\tau_1 \in \mathcal{T}_1} \inf_{\tau_2 \in \mathcal{T}_2} J_1(x; \tau_1, \tau_2) \quad \text{and} \quad \overline{V}(x) = \inf_{\tau_2 \in \mathcal{T}_2} \sup_{\tau_1 \in \mathcal{T}_1} J_1(x; \tau_1, \tau_2),
		\] respectively, and it is immediate that $\underline{V}(x) \le \overline{V}(x)$. If the two values coincide for all $x \in E$, then we call $\tilde{V}: E \rightarrow \mathbb{R}$, $x \mapsto \underline{V}(x)=\overline{V}(x)$ the value.
		Now, let $(p^{(1)},p^{(2)})$ be a global Markovian equilibrium with value $V$, then, writing $\tilde\tau_i = \tau^{p^{(i)}}$, we have for any $x \in E$ and any pair $(\tau_1,\tau_2)$ of strategies
		\[
		J_1(x;\tau_1, \tilde{\tau}_2) \le J_1(x; \tilde{\tau}_1, \tilde{\tau}_2) \le J_1(x; \tilde{\tau}_1, \tau_2).
		\] Hence, we obtain
		\[
		\overline{V}(x) \le \sup_{\tau_1 \in \mathcal{T}_1} J_1(x; \tau_1, \tilde{\tau}_2) \le J_1(x; \tilde{\tau}_1, \tilde{\tau}_2) = V(x) \le \inf_{\tau_2 \in \mathcal{T}_2} J_1(x;\tilde{\tau}_1, \tau_2) \le \underline{V}(x)
		\] for all $x \in E$, which means that $\tilde{V}=V$. In particular, we obtain that every equilibrium of a zero-sum game has the same value $V$. \defEnd
	\end{remark}

    \subsection{Existence and Construction of a Global Markovian Equilibrium for General Zero-sum Games}
    \label{sec:zero_constr}
    In this subsection, we construct the value of a general zero-sum game under our integrability condition (Assumption~\ref{assum:bounded-functions}) and prove the existence of a global Markovian equilibrium. The main result, Theorem~\ref{thm:zerosum-construction}, establishes the existence of such an equilibrium and provides its explicit construction.

    For the middle value among the constants $a,b,c \in \mathbb{R}$ we use the notation 
	$\med(a,b,c) = \min\left\{\max\{a,b\}, \max\{a, c\}, \max\{b,c\}\right\}.$
    Using this notation, we define the function $\mathcal{W}: \mathbb{R}^4 \rightarrow \mathbb{R}$ via
\begin{equation}
    \label{eq:def_w}
	\mathcal{W}(f,g,h,w) = \begin{cases}
		h &\text{if } g \le h \le f \\
		\med (f,g,w) &\text{if } f \le h \le g \\
		w &\text{if } (h < f \wedge g \text{ or } h > f \vee g ) \text{ and } f \le w \le g  \\
		g &\text{if } h < f \wedge g \text{ and } w \geq g \\
		f &\text{if } h > f \vee g \text{ and } w \leq f \\
		\frac{gf-wh}{g-h+f-w} &\text{if } h \wedge w > f \vee g \text{ or } h \vee w < f \wedge g.
	\end{cases}
\end{equation}
(The role of $w$ will soon be clear). 
\begin{remark}\label{rem:monotonicity}
It can be seen that the function $\mathcal{W}$ is increasing in $w$, for fixed $f$, $g$ and $h$. Indeed, consider, e.g., the last case in \eqref{eq:def_w} and that the intersection between $p \mapsto (1-p)w+pg$ and $p \mapsto (1-p) f + p h$ is $p=\frac{f-w}{f-h+f-w}$ and use that the value for these functions at this intersection point is $\frac{gf-wh}{g-h+f-w}$, which is clearly increasing in $w$ (since one of the functions is increasing in $p$ and the other is decreasing in $p$, given the conditions in the last case in \eqref{eq:def_w}).
It can also be seen that the function $\mathcal{W}$ is bounded from below and above by 
$f\wedge g\wedge h$ and 
$f\vee g\vee h$, respectively. \defEnd
\end{remark}
Let us now define a function $V^0: E \rightarrow \mathbb{R}$ according to $x \mapsto f(x) \wedge g(x) \wedge h(x)$, and functions $V^{n+1}: E \rightarrow \mathbb{R}$, for $n \in \mathbb{N}_0$, according to
\[
	V^{n+1}(x) = \mathcal{W} (f(x),g(x),h(x), \alpha \Pi V^n(x)).
\] 
Using the choice of $V^0$ and Remark \ref{rem:monotonicity},
it can easily be shown that the sequence $(V^n(x))_{n \in \mathbb{N}}$ is increasing (for each fixed $x$). Using also that $\mathcal{W}$ is bounded in $w$, we find that the limit 
$V: E \rightarrow \mathbb{R}$ given by $V(x):= \lim_{n \rightarrow \infty} V^n(x)$ exists, is measurable and solves the equation
\begin{equation}
	\label{eq:value_fct_zero_sum}
	V(x)= \mathcal{W}(f(x),g(x),h(x), \alpha \Pi V(x)), \quad \text{for all } x \in E.
\end{equation}
Let us also  define the sets
\begin{align*}
B_1 & = \{x\in E: g(x) \le h(x) \le f(x) \}\\
B_2 & = \{x\in E: f(x) \le h(x) \le g(x), \alpha \Pi V(x) < f(x)\}\setminus B_1\\
B_3 & = \{x\in E: f(x) \le h(x) \le g(x), f(x) \le \alpha \Pi V(x) \le g(x) \}\setminus (\cup_{i=1}^2B_i )  \\
B_4 & = \{x\in E: f(x) \le h(x) \le g(x), g(x) < \alpha \Pi V(x) \}\setminus (\cup_{i=1}^3B_i)\\
B_5 & = \{x\in E: (h(x)< f(x) \wedge g(x) \text{ or } h(x)>f(x) \vee g(x))\text{ and } f(x) \le \alpha \Pi V(x) \le g(x) \}\setminus (\cup_{i=1}^4B_i)\\
B_6 & = \{x\in E: h(x) < f(x) \wedge g(x) \text{ and } \alpha \Pi V(x)  \ge g(x) \}\setminus (\cup_{i=1}^5B_i)\\
B_7 & = \{x\in E:h(x)> f(x) \vee g(x) \text{ and } \alpha \Pi V(x)  \le f(x)   \}\setminus (\cup_{i=1}^6B_i)\\
B_8 & = \{x\in E: h \wedge \alpha \Pi V(x) > f(x) \vee g(x) \text{ or } h(x) \vee \alpha \Pi V(x) < f(x) \wedge g(x) \}\setminus (\cup_{i=1}^7B_i).
\end{align*}
        
The following result describes how to construct an equilibrium from the function $V$ in \eqref{eq:value_fct_zero_sum} and the sets $B_i$.
\begin{theorem}
        \label{thm:zerosum-construction}

               Let $V: E \rightarrow \mathbb{R}$ be a measurable solution to \eqref{eq:value_fct_zero_sum} and choose $p^{(1)}, p^{(2)}: E \rightarrow [0,1]$ such that
		\begin{itemize}
			\item[(i)] $x\in B_1 \Rightarrow p^{(1)}(x)=p^{(2)}(x)=1$
			\item[(ii)] $ x\in B_2 \Rightarrow p^{(1)}(x)=1, p^{(2)}(x)=0$
			\item[(iii)] $x \in B_3 \Rightarrow p^{(1)}(x)=p^{(2)}(x)=0$
			\item[(iv)] $x\in B_4 \Rightarrow p^{(1)}(x)=0, p^{(2)}(x)=1$
			\item[(v)] $x\in B_5 \Rightarrow p^{(1)}(x)=p^{(2)}(x)=0$
			\item[(vi)] $ x\in B_6 \Rightarrow p^{(1)}(x)=0, p^{(2)}(x)=1$
			\item[(vii)] $x\in B_7\Rightarrow p^{(1)}(x)=1, p^{(2)}(x)=0$
			\item[(viii)] $x\in B_8 \Rightarrow p^{(1)}(x) = \frac{g(x)-\alpha \Pi V(x)}{f(x) - h(x)-\alpha \Pi V(x) + g(x)},  p^{(2)}(x) = \frac{f(x)-\alpha \Pi V(x)}{f(x) - h(x)-\alpha \Pi V(x) + g(x)}$. 
		\end{itemize} 
       (It is directly seen that such functions can be found.) Then $(p^{(1)},p^{(2)})$ is a global Markovian equilibrium with value $V$. In particular, every zero-sum game has a global Markovian equilibrium.
\end{theorem}

\begin{proof}
	It suffices to prove that $V$ and $(p^{(1)}, p^{(2)})$ satisfy \eqref{eq:Zero_All_Conditions} and that $V$ solves the integrability condition in Corollary \ref{cor:veri_zero_sum}(B)  (proved last). 	In fact we will prove that \eqref{eq:Zero_v_Maxi}, \eqref{eq:Zero_Maxi_Stop} and \eqref{eq:Zero_Maxi_Cont} hold, and note that the remaining parts of \eqref{eq:Zero_All_Conditions}, i.e., \eqref{eq:Zero_v_Mini}, \eqref{eq:Zero_Mini_Stop} and \eqref{eq:Zero_Mini_Cont}, can be similarly proved. It can be verified that the premises in (i)--(viii)  describe all possible cases and we will show that \eqref{eq:Zero_v_Maxi}, \eqref{eq:Zero_Maxi_Stop} and \eqref{eq:Zero_Maxi_Cont} hold for these cases separately.  
	
	(i): For $x \in B_1$, i.e., with $g(x) \le h(x) \le f(x)$, we will use that by assumption $p^{(1)}(x)=p^{(2)}(x)=1$. With this we have 
	\begin{align*}
		&\max \left\{(1-p^{(2)}(x)) \alpha \Pi V(x) + p^{(2)}(x) g(x), (1- p^{(2)}(x)) f(x) + p^{(2)}(x) h(x) \right\} \\
		& = \max \left\{ g(x),h(x) \right\}=h(x),
	\end{align*} which implies that \eqref{eq:Zero_Maxi_Stop} holds, while \eqref{eq:Zero_Maxi_Cont} is void. Moreover, we obtain $\max \left\{ g(x),h(x) \right\} = h(x)=V(x)$, since $V$ solves \eqref{eq:value_fct_zero_sum}. Hence, \eqref{eq:Zero_v_Maxi} holds.
	
	(ii): 
    We will use here that $p^{(1)}(x)=1$ and $p^{(2)}(x)=0$. With this we have
	\begin{align*}
		&\max \left\{(1-p^{(2)}(x)) \alpha \Pi V(x) + p^{(2)}(x) g(x), (1- p^{(2)}(x)) f(x) + p^{(2)}(x) h(x) \right\} \\
		& = \max \left\{ \alpha \Pi V(x),f(x) \right\}.
	\end{align*} By assumption $\alpha \Pi V(x) < f(x)$, which implies that  \eqref{eq:Zero_Maxi_Stop} holds, while is \eqref{eq:Zero_Maxi_Cont} void. Moreover, we obtain that $\max \left\{ \alpha \Pi V(x),f(x) \right\} = f(x)=V(x)$, since $V$ solves \eqref{eq:value_fct_zero_sum}. Hence, \eqref{eq:Zero_v_Maxi} holds.
	
	(iii): We will use here that $p^{(1)}(x)=p^{(2)}(x)=0$. With this we have
	\begin{align*}
		&\max \left\{(1-p^{(2)}(x)) \alpha \Pi V(x) + p^{(2)}(x) g(x), (1- p^{(2)}(x)) f(x) + p^{(2)}(x) h(x) \right\} \\
		& = \max \left\{ \alpha \Pi V(x),f(x) \right\}.
	\end{align*} By assumption $f(x) \le \alpha \Pi V(x)$, which implies that \eqref{eq:Zero_Maxi_Cont} holds, while \eqref{eq:Zero_Maxi_Stop} is void. Moreover, we obtain that $\max \left\{ \alpha \Pi V(x),f(x) \right\} = \alpha \Pi V (x)=V(x)$, since $V$ solves \eqref{eq:value_fct_zero_sum}. Hence, \eqref{eq:Zero_v_Maxi}  holds.
	
	(iv): We will use here $p^{(1)}(x)=0$ and $p^{(2)}(x)=1$. With this we have
	\begin{align*}
		&\max \left\{(1-p^{(2)}(x)) \alpha \Pi V(x) + p^{(2)}(x) g(x), (1- p^{(2)}(x)) f(x) + p^{(2)}(x) h(x) \right\} \\
		& = \max \left\{ g(x),h(x) \right\}.
	\end{align*} By assumption $g(x) \ge h(x)$, which implies that \eqref{eq:Zero_Maxi_Cont} holds, while \eqref{eq:Zero_Maxi_Stop} is void. Moreover, we obtain that $\max \left\{ g(x),h(x) \right\} = g(x)=V(x)$, since $V$ solves \eqref{eq:value_fct_zero_sum} and $g(x) < \alpha \Pi V(x)$. Hence, \eqref{eq:Zero_v_Maxi} holds.
	
	(v): We will use here $p^{(1)}(x)=p^{(2)}(x)=0$. With this we have
	\begin{align*}
		&\max \left\{(1-p^{(2)}(x)) \alpha \Pi V(x) + p^{(2)}(x) g(x), (1- p^{(2)}(x)) f(x) + p^{(2)}(x) h(x) \right\}\\
		& = \max \left\{ \alpha \Pi V(x),f(x) \right\}.
	\end{align*} By assumption $f(x) \le \alpha \Pi V(x)$, which implies that \eqref{eq:Zero_Maxi_Cont} holds, while \eqref{eq:Zero_Maxi_Stop} is  void. Moreover, we obtain that $\max \left\{ \alpha \Pi V(x),f(x) \right\} = \alpha \Pi V(x)=V(x)$, since $V$ solves \eqref{eq:value_fct_zero_sum}. 
 Hence, \eqref{eq:Zero_v_Maxi} holds.

	(vi): We will use here $p^{(1)}(x)=0$ and $p^{(2)}(x)=1$. With this we have
	\begin{align*}
		&\max \left\{(1-p^{(2)}(x)) \alpha \Pi V(x) + p^{(2)}(x) g(x), (1- p^{(2)}(x)) f(x) + p^{(2)}(x) h(x) \right\}\\
		& = \max \left\{ g(x),h(x) \right\}.
	\end{align*} By assumption $g(x) > h(x)$, which implies that \eqref{eq:Zero_Maxi_Cont} holds, while  \eqref{eq:Zero_Maxi_Stop} is void. Moreover, we obtain that $\max \left\{ g(x),h(x) \right\} = g(x)=V(x)$, since $V$ solves \eqref{eq:value_fct_zero_sum}.   Hence, \eqref{eq:Zero_v_Maxi} holds.
	
	(vii): We will use here $p^{(1)}(x)=1$ and $p^{(2)}(x)=0$. With this we have
	\begin{align*}
		&\max \left\{(1-p^{(2)}(x)) \alpha \Pi V(x) + p^{(2)}(x) g(x), (1- p^{(2)}(x)) f(x) + p^{(2)}(x) h(x) \right\}\\
		& = \max \left\{ \alpha \Pi V(x),f(x) \right\}.
	\end{align*} By assumption $f(x) \ge \alpha \Pi V(x)$, which implies that \eqref{eq:Zero_Maxi_Stop} holds, while \eqref{eq:Zero_Maxi_Cont} is void. Moreover, we obtain that $\max \left\{ \alpha \Pi V(x),f(x) \right\} = f(x)=V(x)$, since $V$ solves \eqref{eq:value_fct_zero_sum}.      Hence, \eqref{eq:Zero_v_Maxi} holds.
	
	(viii): Since $p^{(2)}(x) = \frac{f(x)-\alpha \Pi V(x)}{f(x) - h(x)-\alpha \Pi V(x) + g(x)}$ we have 
	\begin{align*}
		&\max \left\{(1-p^{(2)}(x)) \alpha \Pi V(x) + p^{(2)}(x) g(x), (1- p^{(2)}(x)) f(x) + p^{(2)}(x) h(x) \right\}\\
		& = \max \left\{ \frac{g(x)f(x)-\alpha \Pi V(x) h(x)}{f(x)-h(x)-\alpha \Pi V(x)+g(x)}, \frac{g(x)f(x)-\alpha \Pi V(x)h(x)}{f(x)-h(x)-\alpha \Pi V(x)+g(x)} \right\} \\
		& = \frac{g(x)f(x)-\alpha \Pi V(x)h(x)}{f(x)-h(x)-\alpha \Pi V(x)+g(x)}= V(x),
	\end{align*} since $V$ solves \eqref{eq:value_fct_zero_sum}, so that \eqref{eq:Zero_v_Maxi} holds. Moreover, the same computation, using $p^{(1)}(x) = \frac{g(x)-\alpha \Pi V(x)}{f(x) - h(x)-\alpha \Pi V(x) + g(x)}$, immediately implies that  \eqref{eq:Zero_Maxi_Cont} and \eqref{eq:Zero_Maxi_Stop} hold. 
	
	It is clear that 
	$|V(x)| \le |f(x)| \vee |g(x)| \vee |h(x)|$ for all $x \in E$, since  $V$ solves \eqref{eq:value_fct_zero_sum} and $\cal W$ is bounded from below by $f \wedge g \wedge h$ and above by $f \vee g \vee h$. 
	Hence, the required integrability in Corollary~\ref{cor:veri_zero_sum}(B) is satisfied (see Assumption~\ref{assum:bounded-functions} and subsequent observations). 
\end{proof}

	\begin{remark}
		From Remark~\ref{rem:value} and Theorem~\ref{thm:zerosum-construction} we can now conclude that there is a unique solution to \eqref{eq:value_fct_zero_sum}. Indeed, by Remark~\ref{rem:value}, the value of any zero-sum game is unique. Since by Theorem~\ref{thm:zerosum-construction}, every solution of \eqref{eq:value_fct_zero_sum} is the value of an equilibrium, the claim follows. \defEnd
	\end{remark}	

    \begin{remark}\label{comment-on-middle-val-assum} 
		Let us briefly relate our result to the existence results in the literature. As outlined in the introduction, characterization results have only been obtained under the assumption $F^1 \le H^1 \le G^1$ (e.g., \cite{ElbakidzeMarkovGame} for Markov games and \cite{OhtsuboTerminating} for general zero-sum games). Without this assumption only the existence of  $\epsilon$-equilibria (e.g., \cite{RosenbergSolanVieilleZeroSum}) could be established. Here, we describe the explicit \emph{construction} of an equilibrium under a mere integrability assumption. 
		\defEnd
	\end{remark}

\begin{remark}
    \label{rem:zero_sum_pure_equi}
    If 
    \begin{align}\label{assum:h-middle}
		h(x) = \med (f(x), h(x), g(x)), \enskip \text{for all } x \in E,
\end{align}
i.e., the function $h$ is in the middle, then we obtain from Theorem~\ref{thm:zerosum-construction} that a \emph{pure} global Markovian equilibrium exists. Indeed, since \eqref{assum:h-middle} holds, for every $x \in E$, it holds that 
$x \in \cup_{i=1}^4B_i$. Hence, $p^{(1)}$ and $p^{(2)}$ from Theorem~\ref{thm:zerosum-construction} are pure strategies. \defEnd
\end{remark}

	\subsection{Zero-sum Games where Randomization is Necessary} 
	\label{sec:zero_random}
	In the previous section (see Remark~\ref{rem:zero_sum_pure_equi}) we found that if the functions $f,g$ and $h$ satisfy the condition \eqref{assum:h-middle}, then a pure equilibrium always exits. A natural question to ask is if we should expect a pure equilibrium to exist also in case this condition is not satisfied. Note that if \eqref{assum:h-middle} does not hold then at least one of the sets
	\begin{align}\label{sets:no-pure}
		M_1:=\{x \in E: (f \vee g) (x) < h(x)\} \text { and } 
		M_2:=\{x \in E: h(x)< (f \wedge g) (x )\},
	\end{align}
	is non-empty. We here give an answer to this question by providing conditions under which no pure global Markovian equilibrium exists.  
	The first result, Theorem \ref{thm:suff-nec-cond-no-pure}, gives necessary and sufficient conditions for the non-existence of pure equilibria in terms of the value function of the game. The second result, Proposition~\ref{cor:suffcond-no-pure}, which relies on the first result, gives sufficient conditions for the non-existence of pure equilibria in terms of the payoff functions $f,g$ and $h$ and certain standard optimal stopping problems. Both results are, as we will see, obtained from the Wald-Bellman system equilibrium characterization in Corollary \ref{cor:veri_zero_sum}.

	\begin{theorem} [Sufficient and necessary conditions for non-existence of pure equilibria]\label{thm:suff-nec-cond-no-pure}
		Let $V$ denote the value function of the zero-sum game.
		Then, a pure global Markovian equilibrium exists if and only if
		\begin{align}\label{eq1:suff-nec-cond-no-pure}
			h(x) \vee  \alpha \Pi V(x) \geq  (f \wedge g) (x) \text{ and } h(x) \wedge  \alpha \Pi V(x) \leq  (f \vee g) (x), \text{ for all } x \in E. 
		\end{align}
	\end{theorem}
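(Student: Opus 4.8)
The plan is to reduce the statement to an elementary fact about $2\times 2$ zero‑sum matrix games, using the Wald--Bellman characterisation of Corollary~\ref{cor:veri_zero_sum}. Write $\pi(x):=\alpha\Pi V(x)$, and for each $x\in E$ let $G_x$ be the $2\times 2$ zero‑sum game in which player~$1$ (the maximiser) picks a row ``stop''/``continue'', player~$2$ (the minimiser) picks a column ``stop''/``continue'', and player~$1$'s payoff matrix is $\bigl(\begin{smallmatrix} h(x) & f(x) \\ g(x) & \pi(x)\end{smallmatrix}\bigr)$ (rows: player~$1$ stops resp.\ continues; columns: player~$2$ stops resp.\ continues). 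In the mixed extension, where player~$i$ stops with probability $p^{(i)}(x)$, player~$1$'s expected payoff is $p^{(1)}(x)\bigl[p^{(2)}(x)h(x)+(1-p^{(2)}(x))f(x)\bigr]+(1-p^{(1)}(x))\bigl[(1-p^{(2)}(x))\pi(x)+p^{(2)}(x)g(x)\bigr]$, and one checks that the six relations \eqref{eq:Zero_v_Maxi}--\eqref{eq:Zero_Mini_Cont}, read at a fixed $x$ with $V$ (hence $\pi(x)$) frozen, assert exactly that $(p^{(1)}(x),p^{(2)}(x))$ is a Nash equilibrium of $G_x$ whose player‑$1$ value equals $V(x)$.

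Two observations then do the work. First: by Remark~\ref{rem:value} the given equilibrium has the unique value $V$, and its strategies satisfy \eqref{eq:Zero_All_Conditions} by Corollary~\ref{cor:veri_zero_sum}(A); hence $G_x$ has a (possibly mixed) Nash equilibrium for every $x$, so $G_x$ has a value, and since all equilibria of a zero‑sum game share the value, that value is $V(x)$. Also $\min(f(x),h(x))\le\underline V(x)\le V(x)\le\overline V(x)\le\max(g(x),h(x))$ (compare with the strategies ``stop at time $0$''), so $|V|\le\max\{|f|,|g|,|h|\}$ and therefore $\sup_n|V(\tilde X_n)|\in L^1$ by Assumption~\ref{assum:bounded-functions}, making Corollary~\ref{cor:veri_zero_sum}(B) applicable to $V$. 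Second: since \eqref{eq:Zero_All_Conditions} is pointwise in $x$ once $V$ is fixed, a pure global Markovian equilibrium exists iff there is a measurable map $x\mapsto(p^{(1)}(x),p^{(2)}(x))\in\{0,1\}^2$ solving \eqref{eq:Zero_All_Conditions} with this $V$ --- equivalently, such that for every $x$ the pair $(p^{(1)}(x),p^{(2)}(x))$ is a \emph{pure‑strategy} saddle point of $G_x$. Here the ``only if'' part uses Corollary~\ref{cor:veri_zero_sum}(A) applied to the pure equilibrium (whose value is $V$ by Remark~\ref{rem:value}), and the ``if'' part uses Corollary~\ref{cor:veri_zero_sum}(B), the chosen saddle point automatically carrying player‑$1$ value $V(x)$ by the first observation. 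The measurable selection is routine: partition $E$ by the ordering of $h(x),\pi(x),f(x),g(x)$ and define $(p^{(1)},p^{(2)})$ piecewise via a fixed priority rule.

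It then remains to prove the elementary lemma: the matrix game $\bigl(\begin{smallmatrix} a & b \\ c & d\end{smallmatrix}\bigr)$, row player maximising, has a pure‑strategy saddle point iff $\min(a,d)\le\max(b,c)$ and $\max(a,d)\ge\min(b,c)$. For ``if'' I would use the relabelling symmetries --- permuting rows, permuting columns, and transposing‑and‑negating, each of which preserves both ``has a pure saddle point'' and the displayed pair of inequalities --- to move the largest of the four entries to position $(1,1)$, and then check that if none of the four cells is a saddle point one is forced into one of the two excluded regimes $\min(a,d)>\max(b,c)$ or $\max(a,d)<\min(b,c)$, a contradiction. For ``only if'' I would verify directly that in each of those two regimes every cell fails the saddle‑point test. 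Applied to $G_x$, whose diagonal is $\{h(x),\pi(x)\}$ and whose antidiagonal is $\{f(x),g(x)\}$, this says $G_x$ has a pure saddle point iff $h(x)\wedge\pi(x)\le f(x)\vee g(x)$ and $h(x)\vee\pi(x)\ge f(x)\wedge g(x)$; quantifying over $x\in E$ and combining with the second observation yields \eqref{eq1:suff-nec-cond-no-pure}.

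The step I expect to be most delicate is the first: verifying carefully that, restricted to $\{0,1\}$‑valued strategies, the six implications \eqref{eq:Zero_v_Maxi}--\eqref{eq:Zero_Mini_Cont} reduce to \emph{exactly} the four pure‑saddle‑point conditions of $G_x$, with no residual constraint tying $V(x)$ to $\pi(x)$ beyond what the saddle‑point structure already forces, together with a clean proof of the matrix lemma via the symmetry reduction; the integrability bookkeeping needed to apply Corollary~\ref{cor:veri_zero_sum}(B) is a minor point, handled by the bound $|V|\le\max\{|f|,|g|,|h|\}$ above.
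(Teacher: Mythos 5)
Your proof is correct and follows essentially the same route as the paper: both reduce the problem pointwise in $x$ to the four pure stopping profiles via Corollary~\ref{cor:veri_zero_sum}, use the pre-existing equilibrium (together with the uniqueness of the value from Remark~\ref{rem:value}) to pin down $V(x)$ and the integrability bound $|V|\le\max\{|f|,|g|,|h|\}$, and conclude with Corollary~\ref{cor:veri_zero_sum}(B). Your $2\times 2$ matrix-game lemma is exactly the paper's Lemma~\ref{lem:appendix} combined with its Cases 1--4 in disguise (the four pure saddle-point cells correspond to the four orderings $g\le h\le f$, $\alpha\Pi V\le f\le h$, $h\le g\le\alpha\Pi V$, $f\le\alpha\Pi V\le g$), so the repackaging as the classical characterization of pure saddle points in $2\times2$ zero-sum games is a pleasant conceptual gloss rather than a genuinely different argument.
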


	\begin{remark}
		\label{rem:intuition_randomization}
		The condition~\eqref{eq1:suff-nec-cond-no-pure} has a clear interpretation. Indeed, at every time point the players can decide to continue or to stop the game. If both players choose the same action (i.e., both continue or both stop), then the reward of player 1 is $h$ (when both stop) or $\alpha \Pi V$ (when both continue). If one player stops and the other continues the reward of player $1$ is $f$ if player $1$ stops and $g$ if player $2$ stops. Hence, condition~\eqref{eq1:suff-nec-cond-no-pure} requires that the rewards associated to choosing the same action do not dominate or are not dominated by the reward associated to choosing different actions. We immediately see that condition~\eqref{eq1:suff-nec-cond-no-pure} should be necessary for the existence of a pure equilibrium: Suppose that \eqref{eq1:suff-nec-cond-no-pure} does not hold. Suppose first that the reward associated to choosing the same action dominates the reward associated to choosing different actions. Then player 1 would always want to choose the action player 2 chooses and player 2 would always want to choose the action that player 1 does not choose. Hence, no pure equilibrium can exist (as detailed in the proof below). An analogous argument works also for the case that the reward associated to choosing the same action is dominated by the reward associated to choosing different actions. \defEnd
	\end{remark}
	
	\begin{proof}[Proof of Theorem~\ref{thm:suff-nec-cond-no-pure}]
		Let us first show that if there exists a pure global Markovian equilibrium $(p^{(1)},p^{(2)})$, then \eqref{eq1:suff-nec-cond-no-pure} holds. By Corollary \ref{cor:veri_zero_sum} we know that $V$ and $(p^{(1)},p^{(2)})$ satisfy 
		\eqref{eq:Zero_v_Maxi}--\eqref{eq:Zero_v_Mini} for each $x\in E$. For a pure equilibrium we have four possible cases for $(p^{(1)},p^{(2)})$ for each $x\in E$, and given each of these it is easily verified, that \eqref{eq1:suff-nec-cond-no-pure} holds:
		
		Case 1: $(p^{(1)}(x),p^{(2)}(x))=(1,1)$. In this case both players stop immediately and the equilibrium value therefore satisfies $V(x)=h(x)$ (by definition, see \eqref{eq:zero_value_fct}). Moreover, in this case it follows from \eqref{eq:Zero_v_Maxi}--\eqref{eq:Zero_v_Mini} that 
		$h(x) = \max \left\{g(x),  h(x) \right\}$ and $h(x) = \min \left\{f(x),  h(x)\right\}$.
		It follows that, $g(x)\leq h(x) \leq f(x)$, which in turn directly implies that \eqref{eq1:suff-nec-cond-no-pure} holds.

		Case 2: $(p^{(1)}(x),p^{(2)}(x))=(1,0)$. In this case $V(x)=f(x)$ and \eqref{eq:Zero_v_Maxi}--\eqref{eq:Zero_v_Mini} imply that 
		$f(x) = \max \left\{\alpha \Pi V(x),  f (x) \right\}$ and 
		$f(x) = \min \left\{f(x),  h(x)\right\}$.
		Hence, $\alpha \Pi V(x)\leq f(x) \leq h(x)$, which in turn implies that \eqref{eq1:suff-nec-cond-no-pure} holds.

		Case 3: $(p^{(1)}(x),p^{(2)}(x))=(0,1)$. In this case $V(x)=g(x)$ and 
		\eqref{eq:Zero_v_Maxi}--\eqref{eq:Zero_v_Mini} imply that 
		$g(x) = \max \left\{g(x),  h(x) \right\}$ and $g(x) = \min \left\{\alpha \Pi V(x),  g(x)\right\}$. 
		Hence, $h(x) \leq g(x) \leq \alpha \Pi V(x) $, which in turn implies that \eqref{eq1:suff-nec-cond-no-pure} holds.  
		
		Case 4: $(p^{(1)}(x),p^{(2)}(x))=(0,0)$. In this case $V(x)=\alpha \Pi V(x)$ and 
		\eqref{eq:Zero_v_Maxi}--\eqref{eq:Zero_v_Mini} imply that 
		$\alpha \Pi V(x) = \max \left\{\alpha \Pi V(x),  f(x) \right\}$ 
		and 
		$\alpha \Pi V(x) = \min \left\{\alpha \Pi V(x),  g(x)\right\}$. 
		Hence, $f(x) \leq \alpha \Pi V(x) \leq  g(x)$, which in turn implies that \eqref{eq1:suff-nec-cond-no-pure} holds.

		Let us now prove the opposite implication. If \eqref{eq1:suff-nec-cond-no-pure} holds, then $x \notin B_8$
        and hence the strategies $p^{(1)}$ and $p^{(2)}$ in Theorem~\ref{thm:zerosum-construction} are pure, which directly proves the claim.
                	\end{proof}

	\begin{prop}[Sufficient conditions for non-existence of pure equilibria] \label{cor:suffcond-no-pure}
		Consider the sets $M_i,i=1,2$ defined in \eqref{sets:no-pure} and the standard stopping problems
		\begin{align*}
			V_{M_1}(x)&:= \sup_{\tau \in \mathcal{T}_1, \tau \leq \tau_{M_1}} \mathbb{E}_x \left[ \alpha^\tau k_1(X_\tau) \right], \quad x \in M_1,\\
			V_{M_2}(x)&:= \inf_{\tau \in \mathcal{T}_2, \tau \leq \tau_{M_2}} \mathbb{E}_x \left[ \alpha^\tau k_2(X_\tau) \right], \quad x \in M_2,
		\end{align*} 
		where $ \tau_{M_i}:= \inf \{{t \geq0 : X_t \notin M_i}\}$ and 
		\begin{align}
			k_1(x)& :=   \mathbb{I}_{\{x \in M_1\}} f(x) + \mathbb{I}_{\{x \notin M_1\}} ( f \wedge h)(x),\label{cor:suffcond-no-pure:xyz}\\
			k_2(x)& :=   \mathbb{I}_{\{x \in M_2\}} g(x) + \mathbb{I}_{\{x \notin M_2\}} ( g \vee h)(x) \nonumber.
		\end{align}
		(A) If $V_{M_1}(x_0)>(f \vee g)(x_0)$ for some $x_0 \in M_1$ then no pure global Markovian equilibrium exists. (B) If $V_{M_2}(x_0)<(f \wedge g)(x_0)$ for some $x_0 \in M_2$ then no pure global Markovian equilibrium exists.
	\end{prop}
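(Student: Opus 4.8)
The plan is to argue by contradiction. Suppose a \emph{pure} global Markovian equilibrium $(p^{(1)},p^{(2)})$ exists, and let $V:E\to\mathbb R$ be its value function. By Corollary~\ref{cor:veri_zero_sum}(A) the pair $(p^{(1)},p^{(2)})$ and $V$ solve the Wald--Bellman system \eqref{eq:Zero_All_Conditions}; moreover, by Definition~\ref{def:NE} together with the zero-sum structure (cf.\ Remark~\ref{rem:value}) we have $V(x)=\sup_{\tau_1\in\mathcal T_1}J_1(x;\tau_1,\tau^{p^{(2)}})$ and $V(x)=\inf_{\tau_2\in\mathcal T_2}J_1(x;\tau^{p^{(1)}},\tau_2)$ for all $x\in E$, where $J_1$ denotes the zero-sum payoff to player $1$. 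Since $|k_i|\le\max\{|f|,|g|,|h|\}$, Assumption~\ref{assum:bounded-functions} makes the auxiliary stopping problems $V_{M_i}$ well posed. I would prove (A) in detail; part (B) is entirely dual, obtained by interchanging the two players, $f\wedge g$ with $f\vee g$, and $M_2$ with $M_1$.

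First I would pin down the structure of the equilibrium on $M_1$. Fix $x\in M_1$, so $f(x)<h(x)$ and $g(x)<h(x)$, and recall $p^{(1)}(x),p^{(2)}(x)\in\{0,1\}$. If $p^{(2)}(x)=1$, then \eqref{eq:Zero_v_Maxi} forces $V(x)=\max\{g(x),h(x)\}=h(x)$, while \eqref{eq:Zero_v_Mini} forces $V(x)=\min\{f(x),h(x)\}=f(x)$ when $p^{(1)}(x)=1$ and $V(x)=\min\{\alpha\Pi V(x),g(x)\}\le g(x)$ when $p^{(1)}(x)=0$ — both impossible since $f(x),g(x)<h(x)$. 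Hence $p^{(2)}(x)=0$ for every $x\in M_1$. Substituting $p^{(2)}(x)=0$ back into \eqref{eq:Zero_v_Mini} gives $V(x)=f(x)$ (if $p^{(1)}(x)=1$) or $V(x)\le g(x)$ (if $p^{(1)}(x)=0$), so
\[
V(x)\le (f\vee g)(x)\qquad\text{for all }x\in M_1 .
\]

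Next I would construct a profitable deviation for player $1$. Since $V_{M_1}(x_0)>(f\vee g)(x_0)$, pick $\tau_0\in\mathcal T_1$ with $\tau_0\le\tau_{M_1}$ and $\mathbb E_{x_0}[\alpha^{\tau_0}k_1(X_{\tau_0})]>(f\vee g)(x_0)$ (possible because $V_{M_1}(x_0)$ is the supremum of such quantities). Because $p^{(2)}\equiv 0$ on $M_1$ and the $\xi^{(2)}_n$ are $U(0,1)$-distributed, player $2$ a.s.\ does not stop at any time $n$ with $X_n\in M_1$, so $\tau^{p^{(2)}}\ge\tau_{M_1}$ a.s. A pathwise inspection of $J_1(x_0;\tau_0,\tau^{p^{(2)}})$ then shows it is at least $\mathbb E_{x_0}[\alpha^{\tau_0}k_1(X_{\tau_0})]$: on $\{\tau_0<\tau_{M_1}\}$ player $1$ stops alone at $X_{\tau_0}\in M_1$ and gets $\alpha^{\tau_0}f(X_{\tau_0})=\alpha^{\tau_0}k_1(X_{\tau_0})$; on $\{\tau_0=\tau_{M_1}<\infty\}$ player $1$ gets $\alpha^{\tau_0}h(X_{\tau_0})$ or $\alpha^{\tau_0}f(X_{\tau_0})$ according to whether player $2$ stops simultaneously, hence at least $\alpha^{\tau_0}(f\wedge h)(X_{\tau_0})=\alpha^{\tau_0}k_1(X_{\tau_0})$ since $X_{\tau_0}\notin M_1$; and on $\{\tau_0=\infty\}$ (which forces $\tau_{M_1}=\tau^{p^{(2)}}=\infty$) both sides vanish. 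Therefore $V(x_0)\ge J_1(x_0;\tau_0,\tau^{p^{(2)}})\ge\mathbb E_{x_0}[\alpha^{\tau_0}k_1(X_{\tau_0})]>(f\vee g)(x_0)$, contradicting the bound from the previous step. For (B), the symmetric analysis gives $p^{(1)}\equiv 0$ and $V\ge f\wedge g$ on $M_2$, and letting player $2$ deviate to $\tau_0\le\tau_{M_2}$ with $\mathbb E_{x_0}[\alpha^{\tau_0}k_2(X_{\tau_0})]<(f\wedge g)(x_0)$ yields $V(x_0)=\inf_{\tau_2}J_1(x_0;\tau^{p^{(1)}},\tau_2)\le\mathbb E_{x_0}[\alpha^{\tau_0}k_2(X_{\tau_0})]<(f\wedge g)(x_0)$, a contradiction. (The algebraic bounds on $M_1$ and $M_2$ can alternatively be read off from Theorem~\ref{thm:suff-nec-cond-no-pure}.)

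The step I expect to be the main obstacle is the deviation argument: one must argue carefully that $p^{(2)}\equiv 0$ on $M_1$ really prevents player $2$ from stopping while $X\in M_1$ (not merely that player $2$ ``plans'' not to), and one must handle the exit time $\tau_{M_1}$, where player $1$ is forced to stop and player $2$ may stop at the same instant — it is precisely to absorb this case that $k_1$ equals $f\wedge h$ off $M_1$, so the pathwise lower bound has to be matched to that definition exactly (and analogously for $k_2$ and $\tau_{M_2}$). The remaining ingredients — admissibility of $\tau_0$ (automatic, since the feasible set of $V_{M_1}$ is contained in $\mathcal T_1$), the integrability bookkeeping, and the algebraic case distinctions — are routine.
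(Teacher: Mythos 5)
Your argument is correct and follows essentially the same route as the paper's proof: both first show that any pure equilibrium must have $p^{(2)}\equiv 0$ on $M_1$, and then use the auxiliary problem $V_{M_1}$ as a deviation for player~$1$ to conclude $V(x_0)\ge V_{M_1}(x_0)>(f\vee g)(x_0)$. The only (harmless) differences are that you close the contradiction directly via the minimizer's equation \eqref{eq:Zero_v_Mini}, which forces $V\le f\vee g$ on $M_1$, whereas the paper routes through the necessary condition of Theorem~\ref{thm:suff-nec-cond-no-pure}, and that your phrase ``substituting $p^{(2)}(x)=0$ into \eqref{eq:Zero_v_Mini}'' is a small slip (that equation involves $p^{(1)}$; your subsequent case analysis on $p^{(1)}$ is what actually does the work and is correct).
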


	\begin{proof}	 
		(A) 
		We prove the statement using a contradiction argument. Suppose $(p^{(1)},p^{(2)})$ is a pure global Markovian equilibrium with value function $V$. 
		
		Recall that $x\in M_1$ means that $(f \vee g) (x) < h(x)$. Note that $p^{(1)}(x)=p^{(2)}(x)=1$ cannot hold since in this case $V(x)=h(x)$, which means that player $2$ (the minimizer) would deviate and obtain $f(x)<h(x)=V(x)$ (in other words, \eqref{eq:Zero_v_Mini} implies the contradiction 
		$V(x) = \min\left\{f(x),h(x)\right\} = f(x)<h(x)=V(x)$). 
		Similarly, $p^{(1)}(x)=0, p^{(2)}(x)=1$ cannot hold since in this case $V(x)=g(x)$, which is a contradiction to 
		\eqref{eq:Zero_v_Maxi} which implies that $V(x) = \max \left\{g(x),h(x)\right\} = h(x)>g(x)=V(x)$. 
		We conclude that $p^{(2)}(x)=0$, for $x\in M_1$, i.e., player $2$ does not stop on $M_1$.  
		
		Since player $2$ does not stop on $M_1$, it follows that the equilibrium 
		value $V$ must be dominated by the value of the one-player optimization problem where player $1$ obtains $f(x)$ if stopping on $M_1$ 
		and $(f \wedge h)(x)$ if stopping directly after leaving $M_1$ (cf. the function $k_1$ in \eqref{cor:suffcond-no-pure:xyz}), specifically for $x=x_0$
		\begin{align*}
			V(x_0) \geq V_{M_1}(x_0) > (f \vee g)(x_0)
		\end{align*}
		(the second inequality is a condition in the statement of the result). This implies that 
		$V(x_0)>f(x_0)$ and using also the finding $p^{(2)}(x_0)=0$, we conclude with \eqref{eq:Zero_v_Maxi} that 
		$V(x_0) = \max \left\{\alpha \Pi V(x_0),  f(x_0) \right\} = \alpha \Pi V(x_0)$. Hence, 
		\begin{align*} 
			\alpha \Pi V(x_0) > (f \vee g)(x_0). 
		\end{align*}
		Using this and that $x_0 \in M_1$ we find that $h (x_0) \wedge  \alpha \Pi V(x_0) > (f \vee g)(x_0)$, which means that \eqref{eq1:suff-nec-cond-no-pure} does not hold. This is the desired contradiction (cf. Theorem~\ref{thm:suff-nec-cond-no-pure}).
		
		(B): The proof is analogous to the one above.
	\end{proof}

	\subsection{A Zero-sum Game With a Randomized But No Pure Equilibrium}\label{sec:zerosum-randomNE-no-mix-example}
	Here we present a game with only two states with a global Markovian \textit{randomized} equilibrium, which does not have a global Markovian \textit{pure} equilibrium. 
	The construction relies on the idea presented in Remark~\ref{rem:intuition_randomization}. Namely, if the game is in state $1$, then one player prefers either that both players do not stop or that they stop simultaneously, while the other player prefers that exactly one player stops. 
	As explained before, with these preferences, it indeed seems reasonable that only randomized equilibria are possible. 
	
	Let $E=\{1,2\}$, $\alpha = 4/5$, $f(1)=g(1)=0$, $h(1)=2$, $f(2)=5$, $g(2)=3$, $h(2)=4$ and
	\[
	\Pi = \begin{pmatrix}
		1/2 & 1/2 \\ 0 & 1 
	\end{pmatrix}.
	\] 
	We first note that this example satisfies the conditions of Proposition~\ref{cor:suffcond-no-pure} with $M_1= \{1\}.$ Hence, by Proposition~\ref{cor:suffcond-no-pure} no global Markovian pure equilibrium exists. Nonetheless a unique global Markovian randomized equilibrium exists:
	
	\textit{Claim:} 
	The randomized stopping strategy pair 
	$\left(p^{(1)}, p^{(2)}\right)$ where
	\begin{equation}
		\label{eq:zero_example_mixed_equi}
		p^{(1)}(1) = p^{(2)}(1)=1/2 \quad \text{and} \quad p^{(1)}(2)=p^{(2)}(2)=1
	\end{equation}
	is the unique global Markovian equilibrium.  
	
	\begin{proof}
		 We start by showing that	$\left(p^{(1)},p^{(2)}\right)$ given by \eqref{eq:zero_example_mixed_equi} is indeed an equilibrium. For this it suffices that $p^{(1)}$, $p^{(2)}$ and $V$ given by $V(1)=1$ and $V(2)=4$ satisfy \eqref{eq:Zero_All_Conditions}. This immediately follows from
		\begin{align*}
			\tfrac{4}{5} (1-p^{(2)}(2)) V(2) + 3p^{(2)}(2) &= 3 < 4 = 5(1-p^{(2)}(2)) + 4p^{(2)}(2) \\
			\tfrac{4}{5} (1-p^{(1)}(2)) V(2) + 5p^{(1)}(2) &= 5 > 4 = 3(1-p^{(1)}(2)) + 4p^{(1)}(2) \\
			(1-p^{(2)}(1)) \left[ \tfrac{2}{5} V(1) + \tfrac{8}{5} \right] &= 1 = 2p^{(2)}(1) \\
			(1-p^{(1)}(1)) \left[ \tfrac{2}{5} V(1) + \tfrac{8}{5} \right] &= 1 = 2p^{(1)}(1).
		\end{align*}
		
	To prove uniqueness, suppose $\left(p^{(1)}, p^{(2)}\right)$ to be any global Markovian equilibrium. From Remark \ref{rem:value}, we know that it has value $V(1)=1$ and $V(2)=4$ as well. By Corollary~\ref{cor:veri_zero_sum}, $p^{(1)}$, $p^{(2)}$ and $V$ satisfy \eqref{eq:Zero_All_Conditions}. Using that we know $V$, \eqref{eq:Zero_v_Maxi} for $x=2$ reads
			\[
			4= \max \left\{ \tfrac{4}{5}(1-p^{(2)}(2)) 4 + 3 p^{(2)}(2), 5(1-p^{(2)}(2)) + 4 p^{(2)}(2) \right\}.
			\]
	which implies $p^{(2)}(2)=1$ and similarly we find $p^{(1)}(2)=1$.
	 \eqref{eq:Zero_v_Maxi} for the state $x=1$ simplifies to
	\[
	1= \max \left\{ 2(1-p^{(2)}(1)) , 2p^{(2)}(1) \right\},
	\]
	yielding $p^{(2)}(1)=\frac{1}{2}$. 
		With analogous arguments it can also be shown that $p^{(1)}(1)=\tfrac{1}{2}$.
\end{proof}

	\section{Results for Symmetric Games}
	\label{sec:symmetric}
We have a symmetric game when the players' payoff functions coincide, i.e., when
	\begin{align}\label{eq:symmetric-game-def}
		f_1=f_2=f, \quad g_1=g_2=g \quad \text{and} \quad h_1=h_2=h.
	\end{align}
	In the case of a countable state space it holds that every symmetric game admits a global Markovian \textit{symmetric} equilibrium -- by which we mean a global Markovian equilibrium $\left(p^{(1)},p^{(2)}\right)$ satisfying $p^{(1)}=p^{(2)}$; see Section~\ref{sec:countable-symNE} below. Here, however, we consider general state space and the case $f=h$ -- i.e., the payoffs of stopping first and stopping simultaneously coincide -- and search for 
	global Markovian symmetric equilibria. 	
	Indeed under this condition we obtain that such an equilibrium can always be constructed explicitly in terms of an associated standard optimal stopping problem:

	\begin{theorem}[Existence and construction of a symmetric equilibrium] 
		\label{thm:equilibrium_symmetric}
		Suppose $f=h$. Let $V: E\rightarrow \mathbb{R}$ be given by
		\[
		V(x):= \sup_{\tau \in \mathcal{T}_1} \mathbb{E}_{x} [ \alpha^\tau f(X_\tau)], \enskip x \in E.
		\] 
		Choose a measurable function $p: E \rightarrow [0,1]$ so that
		\begin{itemize}
			\item[(i)] $\alpha \Pi V(x) = f(x) , g(x) = f(x) \Rightarrow p(x) \in [0,1]$\vspace{-2mm}
			\item[(ii)] $\alpha \Pi V(x) = f(x), g(x) \neq f(x) \Rightarrow p(x) = 0$\vspace{-2mm}
			\item[(iii)] $\alpha \Pi V(x) < f(x), g(x) > f(x) \Rightarrow p(x) = \frac{ f(x) - \alpha \Pi V(x) }{g(x) -\alpha \Pi V(x)}$\vspace{-2mm}
			\item[(iv)] $\alpha \Pi V(x) < f(x), g(x) \le f(x) \Rightarrow p(x)=1$\vspace{-2mm}
			\item[(v)] $\alpha \Pi V(x)> f(x) \Rightarrow p(x)=0$.\vspace{-2mm}
		\end{itemize}
		(It is directly seen that such a function can be found). 
		Then $\left(p^{(1)},p^{(2)}\right)$ with $p^{(1)}=p^{(2)}=p$ is a (symmetric) global Markovian equilibrium with values $V^{(1)}=V^{(2)}=V$. 
	\end{theorem}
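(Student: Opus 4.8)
The plan is to apply the verification direction Theorem~\ref{thm:veri}(B) to the candidate triple
$p^{(1)}=p^{(2)}=p$ and $V^{(1)}=V^{(2)}=V$. By the definition of a global Markovian randomized equilibrium, once the hypotheses of Theorem~\ref{thm:veri}(B) are confirmed for this triple we immediately obtain that $(p,p)$ is a global Markovian equilibrium with values $V^{(1)}=V^{(2)}=V$, and it is symmetric by construction; so the whole proof reduces to (a) checking that the system \eqref{eq:verification_all} holds for this triple and (b) checking the integrability condition \eqref{eq:veri_int} for $V$.

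First I would record two structural simplifications. Since $f=h$, the ``stopping'' side of the Wald--Bellman system collapses: for every $x\in E$,
\[
(1-p(x))f(x)+p(x)h(x)=f(x),
\]
and, because $p^{(1)}=p^{(2)}$ and the data are symmetric under $1\leftrightarrow 2$, the six conditions \eqref{eq:verification_value1}--\eqref{eq:verification_cont2} reduce to the single block (writing $\pi(x):=\alpha\Pi V(x)$)
\begin{align*}
V(x)&=\max\{(1-p(x))\pi(x)+p(x)g(x),\,f(x)\},\\
p(x)>0 &\Rightarrow (1-p(x))\pi(x)+p(x)g(x)\le f(x),\\
p(x)<1 &\Rightarrow (1-p(x))\pi(x)+p(x)g(x)\ge f(x).
\end{align*}
Second, $V$ is by definition the value function of the standard $\alpha$-discounted optimal stopping problem for the reward $f$, hence it satisfies the Wald--Bellman equation $V(x)=\max\{\pi(x),f(x)\}$; in particular $V\ge f$ and $V\ge\pi$ pointwise.

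The core of the argument is then a direct case check that the $p$ prescribed by (i)--(v) makes the three displayed conditions hold at every $x$. Exhaustiveness is clear: the five cases partition $E$ according to the sign of $\pi(x)-f(x)$, and, when $\pi(x)\le f(x)$, according to the sign of $g(x)-f(x)$. The point worth isolating is that in the indifference cases (i) and (iii) the prescribed value of $p(x)$ is chosen exactly so that $(1-p(x))\pi(x)+p(x)g(x)=f(x)$: trivially in (i), since there $\pi(x)=g(x)=f(x)$; and in (iii), since $p(x)=\tfrac{f(x)-\pi(x)}{g(x)-\pi(x)}\in(0,1)$ gives $\pi(x)+p(x)(g(x)-\pi(x))=f(x)$. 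In these cases the ``max'' returns $f(x)=V(x)$ and both implications hold with equality. In the remaining cases one branch of the max strictly dominates: in (v) the continuation branch ($\pi(x)>f(x)$, $p(x)=0$, value $\pi(x)=V(x)$), and in (ii), (iv) the stopping branch ($V(x)=f(x)$ with $p(x)=0$, resp.\ $p(x)=1$), where in (iv) one uses $g(x)\le f(x)$ to see the ``max'' still equals $f(x)$; the two implications are verified by inspection in each case. This establishes \eqref{eq:verification_all}.

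What remains is the integrability condition \eqref{eq:veri_int} for $V^{(i)}=V$, namely $V(\tilde X_1)\in L^1$ and $\mathbb{E}_x[\sup_n V(\tilde X_n)]<\infty$, and this is the step that genuinely requires the standing Assumption~\ref{assum:bounded-functions}: $V$ is (equivalently) the value of the undiscounted optimal stopping problem for $f$ driven by the killed process $\tilde X$, for which $\sup_n|f(\tilde X_n)|\in L^1$, so that $|V|$ is dominated pointwise by $\mathbb{E}_{\cdot}[\sup_n|f(\tilde X_n)|]$ ($V$ being the smallest superharmonic majorant of $f$), and standard optimal stopping theory then gives the required integrability of $V$ along $\tilde X$. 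Granting this, Theorem~\ref{thm:veri}(B) applies and the theorem follows. I expect the case analysis to be long but entirely mechanical; the one ingredient that is not pure bookkeeping — and hence the main thing to get right — is the verification of \eqref{eq:veri_int}.
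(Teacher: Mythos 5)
Your overall strategy is exactly the one the paper indicates (the paper omits the details, saying only that one verifies \eqref{eq:verification_all} and \eqref{eq:veri_int} and then invokes Theorem~\ref{thm:veri}(B), ``similar to'' the proof of Theorem~\ref{thm:pure-NE-zerosum-construction}), and your reduction of the six conditions to one block via $f=h$ and symmetry, together with the five-case check, is correct: in cases (i) and (iii) the prescribed $p(x)$ makes $(1-p(x))\alpha\Pi V(x)+p(x)g(x)=f(x)$, and in (ii), (iv), (v) one branch of the maximum dominates, consistently with the Wald--Bellman equation $V=\max\{\alpha\Pi V,f\}$ for the one-player problem. That part of the write-up I have no quarrel with.

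The genuine gap is the last step, and you have in fact put your finger on exactly the wrong spot while waving at it. The pointwise bound $|V(y)|\le \mathbb{E}_y\bigl[\sup_n|f(\tilde X_n)|\bigr]=:W(y)$ is true, but it does not ``give the required integrability of $V$ along $\tilde X$.'' By the Markov property, $W(\tilde X_n)\le\mathbb{E}\bigl[\sup_k|f(\tilde X_k)|\mid\mathcal{F}_n\bigr]$, i.e., you have only dominated $V(\tilde X_n)$ by a martingale closed by an $L^1$ variable; the running supremum of such a martingale is integrable only under an $L\log L$-type condition (Doob), not under Assumption~\ref{assum:bounded-functions} alone. This is not merely a missing citation: condition \eqref{eq:veri_int} can actually fail for the $V$ of this theorem. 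Take $\alpha=1/2$, states $s_0,s_1,\dots$ and $t_1,t_2,\dots$, transitions $s_n\to t_{n+1}$ and $s_n\to s_{n+1}$ each with probability $1/2$, the $t_j$ leading to a zero-reward absorbing state, and $f=h=0$ on the $s_n$, $f(t_j)=h(t_j)=4^j/j^2$. Then $\mathbb{E}_{s_0}[\sup_n f(\tilde X_n)]=\sum_j j^{-2}<\infty$ (and similarly from every state), so Assumption~\ref{assum:bounded-functions} holds, while $V(s_n)=4^n\sum_{j>n}j^{-2}$ and a direct computation gives $\mathbb{E}_{s_0}[\sup_n V(\tilde X_n)]\ge \tfrac34\sum_j j^{-1}=\infty$. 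Contrast this with Theorem~\ref{thm:pure-NE-zerosum-construction}, where the integrability is obtained from the \emph{pointwise} bound $|V|\le|f|\vee|g|$ by the payoff data, so that $\sup_n|V(\tilde X_n)|\le M\in L^1$; no such bound by the data exists here because $V$ exceeds $f\vee g\vee h$ on the continuation region. So either \eqref{eq:veri_int} must be added as a hypothesis (or deduced from a stronger integrability assumption on $f$), or the verification must be rerouted so as not to pass through Theorem~\ref{thm:veri}(B) -- e.g., by arguing directly on the auxiliary problem \eqref{sec3:optstop2} that its value equals $V$ using only the minimal-excessive-majorant characterization. As written, the step does not close.
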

	The proof of this result uses Theorem~\ref{thm:veri} after verifying that $V$ and $p$ satisfy \eqref{eq:verification_all} with $V^{(i)}=V,p^{(i)}=p,i=1,2$ and \eqref{eq:veri_int}. Indeed, the proof is similar to that of Theorem~\ref{thm:zerosum-construction} and is for the sake of brevity not included in the paper. Let us highlight that the probability with which the players randomize in case (iii) is exactly the probability that makes the other player indifferent between stopping and continuing, i.e., for which
	\[
	(1-p(x)) \alpha \Pi V(x) + p(x) g(x) = (1-p(x)) f(x) + p(x) h(x).
	\]
	An interesting special case is when $f$ is strictly $\alpha$-excessive, i.e., $\alpha \Pi f < f$, and $g>f$. Locally, the players then want to stop as early as possible on the one hand because of the excessivity, but on the other hand prefer to stop later than the other player because of $g>f$. This situation is known as the war of attrition and leads to a situation where randomization seems rational. Indeed:
	
	\begin{corollary}
		Suppose $f$ is strictly $\alpha$-excessive and $g>f=h$. Then, for all $x\in E$
		\[p(x) = \frac{f(x) -\alpha \Pi f(x)}{g(x) - \alpha \Pi f(x)}\in(0,1)\]
		and $\left(p^{(1)},p^{(2)}\right)$ with $p^{(1)}=p^{(2)}=p$ is a (symmetric) global Markovian equilibrium with values $V^{(1)}=V^{(2)}=f$.
	\end{corollary}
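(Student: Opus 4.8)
The plan is to obtain this corollary as a direct specialization of Theorem~\ref{thm:equilibrium_symmetric}: under the stated hypotheses I would identify the value function $V$ explicitly and then check which of the cases (i)--(v) in that theorem is in force at each state.

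First I would show that strict $\alpha$-excessivity, $\alpha\Pi f < f$, forces $V = f$, where $V(x) = \sup_{\tau \in \mathcal T_1}\mathbb E_x[\alpha^\tau f(X_\tau)]$. This is the familiar optimal-stopping fact that an $\alpha$-excessive reward should be collected immediately. Concretely, $\alpha\Pi f \le f$ is equivalent to the discounted reward process $\bigl(\alpha^n f(X_n)\bigr)_{n\in\mathbb N_0}$ being a supermartingale under each $\mathbb P_x$ (with respect to the relevant filtration); the integrability needed here is supplied by \eqref{eq:weak_integrability_condition}, a consequence of Assumption~\ref{assum:bounded-functions}, which also lets one pass to the limit along $\tau \wedge N$ for merely a.s.\ finite $\tau \in \mathcal T_1$. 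Optional sampling then gives $\mathbb E_x[\alpha^\tau f(X_\tau)] \le f(x)$ for all $\tau \in \mathcal T_1$, while $\tau \equiv 0$ attains $f(x)$; hence $V = f$ on $E$. (Alternatively, one may invoke the standard characterization in \cite{peskir2006optimal} directly.)

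With $V = f$, the hypotheses translate pointwise as $\alpha\Pi V(x) = \alpha\Pi f(x) < f(x)$ (strict excessivity) and $g(x) > f(x)$. Strict excessivity rules out cases (i), (ii) and (v) of Theorem~\ref{thm:equilibrium_symmetric}, and $g>f$ rules out case (iv), so \emph{every} state $x$ falls into case (iii), which prescribes
\[
p(x) = \frac{f(x) - \alpha\Pi V(x)}{g(x) - \alpha\Pi V(x)} = \frac{f(x) - \alpha\Pi f(x)}{g(x) - \alpha\Pi f(x)}.
\]
The strict chain $\alpha\Pi f(x) < f(x) < g(x)$ shows the numerator is positive and strictly below the positive denominator, so $p(x)\in(0,1)$; this is the first assertion. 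Theorem~\ref{thm:equilibrium_symmetric} then immediately gives that $(p^{(1)},p^{(2)})$ with $p^{(1)}=p^{(2)}=p$ is a symmetric global Markovian equilibrium with values $V^{(1)}=V^{(2)}=V=f$, which is the second assertion.

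I expect the only point requiring care to be the identification $V=f$ for unbounded stopping times, i.e., ensuring the supermartingale optional-sampling inequality holds over all of $\mathcal T_1$ and not merely over bounded stopping times --- this is exactly where the integrability in Assumption~\ref{assum:bounded-functions}/\eqref{eq:weak_integrability_condition} is used. Everything after that is a routine substitution into the case analysis of Theorem~\ref{thm:equilibrium_symmetric}.
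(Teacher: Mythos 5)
Your proposal is correct and follows essentially the same route as the paper: identify $V=f$ from strict $\alpha$-excessivity (immediate stopping is optimal), observe that the hypotheses place every state in case (iii) of Theorem~\ref{thm:equilibrium_symmetric}, and read off $p(x)\in(0,1)$ from the strict inequalities $\alpha\Pi f(x)<f(x)<g(x)$. The paper's proof is simply a terser version of yours, leaving the supermartingale/optional-sampling justification of $V=f$ implicit as a standard fact.
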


	\begin{proof}
		For $\alpha$-excessive rewards, we have 
		$V(x)=\sup_{\tau \in \mathcal{T}_1} \mathbb{E}_{x} [ \alpha^\tau f(X_\tau)]=f(x)$, since immediate stopping is optimal. 
		We are thus in case (iii) of Theorem~\ref{thm:equilibrium_symmetric}, which proves the result (the conclusion $p(x) \in(0,1)$ follows from the assumptions for $f$ and $g$). 
	\end{proof}
	
	Note that although $V^{(1)}=V^{(2)}=f=h$ in the result above, it holds that immediate stopping (for any, or both, of the players) does not constitute an equilibrium. 
	
	\section{General Existence of Equilibria for Countable State Spaces}\label{sec:Existence}
	In this section we establish the following general existence result. 
	
	\begin{theorem}
		\label{thm:Existence}
		Let the state space $E$ be countable. Then a global Markovian randomized equilibrium $\left(p^{(1)},p^{(2)}\right)$ exists. 
	\end{theorem}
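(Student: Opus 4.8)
The plan is to obtain the equilibrium as a fixed point of the joint best-response correspondence via an infinite-dimensional Kakutani-type theorem. Let $\text{BR}^{(2)}$ denote the player-$2$ counterpart of the one-player best response mapping of Definition~\ref{def:best-response-mapping}, and consider the correspondence $\Phi\colon\mathcal{M}(E,[0,1])^2\to{\cal P}(\mathcal{M}(E,[0,1])^2)$ given by
\[
\Phi\bigl(p^{(1)},p^{(2)}\bigr):=\text{BR}^{(1)}\bigl(p^{(2)}\bigr)\times\text{BR}^{(2)}\bigl(p^{(1)}\bigr).
\]
Because the best-response mappings are defined through genuine one-player optimal stopping problems and, by Proposition~\ref{thm:characterization_optimal_strategies}(B), optimality within the Markovian class coincides with optimality within all of ${\cal T}_i$, a pair $\left(p^{(1)},p^{(2)}\right)$ is a fixed point of $\Phi$ if and only if it is a global Markovian randomized equilibrium in the sense of Definition~\ref{def:NE}. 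Since $E$ is countable, every function $E\to[0,1]$ is measurable and $\mathcal{M}(E,[0,1])=[0,1]^E$ with the product topology is a nonempty, convex, compact, metrizable subset of the locally convex Hausdorff space $\mathbb{R}^E$; hence so is $\mathcal{M}(E,[0,1])^2$. By Proposition~\ref{thm:CharacterizationValues}, each $\text{BR}^{(i)}(\cdot)$ has nonempty, convex and closed values, and therefore so does $\Phi$. Thus, by the Kakutani--Fan--Glicksberg fixed point theorem it suffices to show that $\Phi$ has closed graph; since $\Phi$ is a product correspondence and the players play symmetric roles, this reduces to showing that $p^{(2)}\mapsto\text{BR}^{(1)}\bigl(p^{(2)}\bigr)$ has closed graph.

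For this, write $V_{p^{(2)}}(x):=\sup_{\tau_1\in{\cal T}_1}J_1\bigl(x;\tau_1,\tau^{p^{(2)}}\bigr)$. By Proposition~\ref{thm:characterization_optimal_strategies}(Aiii)--(B) together with the identities \eqref{eq:veri_aux1}--\eqref{eq:veri_aux2} and the sets \eqref{eq:indiff-cont-sets}, $\text{BR}^{(1)}\bigl(p^{(2)}\bigr)$ is precisely the set of measurable $p\colon E\to[0,1]$ with $p(x)=1$ at every $x\in E$ for which
\[
(1-p^{(2)}(x))\alpha\Pi V_{p^{(2)}}(x)+p^{(2)}(x)g_1(x)<(1-p^{(2)}(x))f_1(x)+p^{(2)}(x)h_1(x),
\]
with $p(x)=0$ whenever the reverse strict inequality holds, and with $p(x)$ arbitrary where equality holds. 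Now let $p^{(2)}_n\to p^{(2)}$, $q_n\in\text{BR}^{(1)}\bigl(p^{(2)}_n\bigr)$ and $q_n\to q$ in the product topology. If, for the limit $p^{(2)}$, the displayed strict inequality holds at some $x$, then --- using $p^{(2)}_n(x)\to p^{(2)}(x)$ and, crucially, $\Pi V_{p^{(2)}_n}(x)\to\Pi V_{p^{(2)}}(x)$, together with joint continuity in these two arguments --- the same strict inequality holds for $p^{(2)}_n$ for all large $n$, so $q_n(x)=1$ eventually and $q(x)=1$; by the symmetric argument $q(x)=0$ wherever the reverse strict inequality holds for $p^{(2)}$. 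Hence $q\in\text{BR}^{(1)}\bigl(p^{(2)}\bigr)$. So the whole argument reduces to proving, for each fixed $x\in E$, the continuity of $p^{(2)}\mapsto V_{p^{(2)}}(x)$ and of $p^{(2)}\mapsto\Pi V_{p^{(2)}}(x)$ on $[0,1]^E$.

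To establish this I would use the auxiliary stopping problem of Section~\ref{sec:Optimization}: by Proposition~\ref{thm:characterization_optimal_strategies}(B), $V_{p^{(2)}}(x)=\hat V((x,C))$, and $\hat V$ is the limit of the finite-horizon iterates $\hat V^{[0]}:=\hat r$, $\hat V^{[N+1]}:=\max\{\hat\Pi\hat V^{[N]},\hat r\}$, where $\hat r$ and $\hat\Pi$ depend on $p^{(2)}$ only through the \emph{affine} expressions \eqref{markov-kernel} and \eqref{r-reward}. Using the bound $|\hat V^{[N]}((y,C))|\le\mathbb{E}_y[M]=:c_y$ (uniform in $N$ and $p^{(2)}$) and the estimate $\sum_y\Pi(x,y)c_y=\mathbb{E}_x[\mathbb{E}_{X_1}[M]]\le\alpha^{-1}\mathbb{E}_x[M]<\infty$, which holds by the Markov property and Assumption~\ref{assum:bounded-functions}, an induction on $N$ combined with dominated convergence shows that each $\hat V^{[N]}((x,C);\cdot)$ is continuous on $[0,1]^E$. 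Moreover, since $\hat X$ is killed at rate $1-\alpha$ independently of both the state process and $p^{(2)}$, one obtains the \emph{uniform} error bound $\sup_{p^{(2)}}\bigl|\hat V((x,C);p^{(2)})-\hat V^{[N]}((x,C);p^{(2)})\bigr|\le 2\,\mathbb{E}_x[M\,\mathbb{I}_{\{T>N\}}]$ with $T$ the geometric killing time, and the right-hand side tends to $0$ as $N\to\infty$ by dominated convergence. Hence $\hat V((x,C);\cdot)$ is a uniform limit of continuous functions, so it is continuous, and then $\Pi V_{\cdot}(x)=\sum_y\Pi(x,y)\hat V((y,C);\cdot)$ is continuous by the same domination. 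The hard part will be exactly this continuity statement --- in particular justifying the interchange of the countable sum defining $\Pi$ with limits in $p^{(2)}$, and the uniformity (in $p^{(2)}$) of the value-iteration error --- both of which hinge on the integrability provided by Assumption~\ref{assum:bounded-functions} (cf. Remark~\ref{rem:integrability}); once continuity is available, the closed-graph property and the Kakutani--Fan--Glicksberg application are routine.
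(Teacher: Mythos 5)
Your proposal is correct and follows essentially the same route as the paper: a fixed point of the joint best-response correspondence on $[0,1]^E\times[0,1]^E$ with the product topology via the locally convex Kakutani theorem, with the closed-graph property reduced to continuity of $p^{(2)}\mapsto \hat V$ and $p^{(2)}\mapsto\hat\Pi\hat V$ and then to the $\hat D/\hat I$ characterization of best responses from Proposition~\ref{thm:characterization_optimal_strategies}(Aiii). The only (inessential) difference is that you prove the continuity of the value via Wald--Bellman value iteration with a uniform geometric-killing tail bound, whereas the paper works with the explicit product representation of the finite-horizon rewards $\sup_{\tau\le m}J_\tau$ --- these are the same finite-horizon approximation in different clothing, and your explicit domination estimate $\sum_y\Pi(x,y)\mathbb{E}_y[M]\le\alpha^{-1}\mathbb{E}_x[M]$ for interchanging the countable sum with limits is a welcome detail the paper leaves implicit.
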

	Based on the best response mapping \eqref{best-response-mapping} we define a \emph{two-player best response mapping} as follows. As usual we equipp the countable set $E$ with the discrete $\sigma$-algebra. Hence, every function $p:E \rightarrow [0,1]$ is measurable, that is  $\mathcal{M}(E,[0,1])=[0,1]^E$.

	\begin{definition} [Two-player best response mapping] We call the set-valued mapping 
		\begin{align*}
			F:[0,1]^E \times [0,1]^E &\rightarrow {\cal P}\left([0,1]^E\right) \times {\cal P}\left([0,1]^E\right) \\
			 \left(p^{(1)},p^{(2)}\right) &\mapsto F\left(p^{(1)}, p^{(2)}\right):= \text{BR}^{(1)}\left(p^{(2)}\right) \times \text{BR}^{(2)}\left(p^{(1)}\right),
		\end{align*}
		the two-player best response mapping.
		\defEnd
\end{definition}
The interpretation is that $F$ takes as input a pair of strategies $\left(p^{(1)},p^{(2)}\right)$ and outputs 
	all stopping strategy pairs $\left({\tilde{p}}^{(1)}, {\tilde{p}}^{(2)}\right)$ such that 
	${\tilde{p}}^{(1)}$ is a best response to $p^{(2)}$ and
	${\tilde{p}}^{(2)}$ is a best response to $p^{(1)}$. Thus, it is immediately seen that a fixed point in the two-player best response mapping
	$\left(p^{(1)}, p^{(2)}\right) \in F\left(p^{(1)},p^{(2)}\right)$ 
	is a global Markovian equilibrium. Indeed, by definition of $F$, it holds that if $\left(p^{(1)}, p^{(2)}\right)$ is a fixed point then $p^{(2)} \in \text{BR}^{(2)}\left(p^{(1)}\right)$ and $p^{(1)} \in \text{BR}^{(1)}\left(p^{(2)}\right)$, which means that $\left(p^{(1)}, p^{(2)}\right)$ is a global
	Markovian randomized equilibrium (for details in this argument compare also Definition~\ref{def:NE} and Proposition~\ref{thm:characterization_optimal_strategies}). Hence, in order to prove equilibrium existence, it suffices to show that the set-valued mapping $F$ has a fixed point.

	\begin{remark}\label{rem:existPF}
		Our approach to proving that $F$ has a fixed point relies on a version of Kakutani's fixed point theorem (see \cite[Theorem 7.8.6]{DugundjiFixedPoint2003}). 
		It states that any set-valued mapping from a compact and convex subset of a locally convex space to its power set that is upper-semicontinuous and has non-empty, closed and convex values has a fixed point. 
		A central difficulty in applying this result for any set-valued mapping $G:C \rightarrow {\cal P}\left(C\right)$ is to find a topological space $X$ such that on the one hand $G$ is upper-semicontinuous with respect to the topology and on the other hand $C \subseteq X$ is a compact (and convex) subset. 
		In our case, we essentially  need to find a topology such that the best response mapping $F$ is upper semicontinuous and such that the set in which all best responses lie in is compact. 
		We  equip the set of all strategies $[0,1]^E$ with the topology of pointwise convergence, i.e., the product topology. We remark that we require $E$ to be countable in order to conclude that $[0,1]^E$ is metrizable, which in turn is the reason we can use the sequential criterion \eqref{eq:claim_BR_usc} to prove equilibrium existence (cf. the proof below).
		\defEnd 
	\end{remark}

	\begin{proof}[Proof of Theorem~\ref{thm:Existence}]
		As argued above, it suffices to prove that the two-player best response mapping $F$ has a fixed point.  
		In particular, it suffices to show that $F$ and the space $[0,1]^E \times [0,1]^E$ equipped with the product topology satisfy the assumptions of the version of Kakutani's fixed point theorem stated in Remark~\ref{rem:existPF}.

		It is immediate that the space $[0,1]^E \times [0,1]^E$ is convex. Moreover, by Tychonoff's theorem, this space is, as a product of compact spaces, compact. 
		Finally, by \cite[§18.3]{KoetheTopologoicalVectorSpaces}, we note that this space equipped with the product topology is locally convex. Hence, $[0,1]^E \times [0,1]^E$ satisfies the assumptions of Kakutani's fixed point theorem. 
		
		It follows from Proposition~\ref{thm:CharacterizationValues} that the values of $F$ are non-empty, closed and convex.
		Hence, it suffices to show that $F$ is upper semicontinuous. 
		It is however clear that it suffices to prove this component-wise, i.e., it suffices to prove that $\text{BR}^{(1)}$ is upper semicontinuous, which we will now do.
		Since $[0,1]^E$ is compact and Hausdorff, it suffices, by the closed graph theorem (\cite[Theorem 17.11]{AliprantisClosedGraphThm}), to show that the mapping $\text{BR}^{(1)}$ has a closed graph, 
		i.e., it suffices to show that the set $\{\left(p^{(1)}, p^{(2)}\right) : p^{(1)} \in \text{BR}^{(1)}\left(p^{(2)}\right)\}$ is closed. 
		However, since $E$ is countable it holds the space $[0,1]^E$ is metrizable, and it hence suffices to show that 
		\begin{equation}
			\label{eq:claim_BR_usc}
			p^{(i)}_n \rightarrow p^{(i)}, i=1,2 \text{ and } p_n^{(1)} \in \text{BR}^{(1)}\left(p^{(2)}_n\right) \text{ for all }n \in \mathbb{N}  
			\Rightarrow p^{(1)} \in \text{BR}^{(1)}\left(p^{(2)}\right),
		\end{equation}
		which we will now do. In order to highlight the dependence on the strategy of player $2$, i.e., ${p^{(2)}}$ we will here write 
		$\hat{\Pi}^{p^{(2)}}$ for the Markov kernel \eqref{markov-kernel}, 
		$\hat{r}^{p^{(2)}}$ for reward function \eqref{r-reward}, 
		$\hat{X}^{p^{(2)}}$ for the process given by Definition~\ref{def:X-hat}, 
		and $\hat{V}^{p^{(2)}}$ for the value function of the stopping problem \eqref{sec3:optstop2}.
		We also write, cf. \eqref{sec3:optstop2},  
		\[
		J^{p^{(2)}}_\tau (\hat{x}) = \mathbb{E}_{\hat{x}} \left[ \hat{r}^{p^{(2)}}(\hat{X}^{p^{(2)}}_\tau) \right]. 
		\] 
		Since $p_n^{(2)} \rightarrow p^{(2)}$ if and only if $p_n^{(2)}(x) \rightarrow p^{(2)}(x)$ for all $x \in E$, it is immediately clear that $\hat{r}^{p_n^{(2)}}(\hat{x}) \rightarrow \hat{r}^{p^{(2)}}(\hat{x})$ for all $\hat{x} \in E$.
		The proof of \eqref{eq:claim_BR_usc} consists of three steps:
		\begin{itemize}
			\item[(i)] We prove that for any sequence $p^{(2)}_n \rightarrow p^{(2)}$, we have that $\hat{V}^{p_n^{(2)}}(\hat{x}) \rightarrow \hat{V}^{p^{(2)}}(\hat{x})$ for all $\hat{x} \in \hat{E}$. \vspace{-2mm}
			\item[(ii)] We prove that for any sequence $p^{(2)}_n \rightarrow p^{(2)}$, we have that $\hat{\Pi}^{p_n^{(2)}}\hat{V}^{p_n^{(2)}}(\hat{x}) \rightarrow \hat{\Pi}^{p^{(2)}} \hat{V}^{p^{(2)}}(\hat{x})$ for all $\hat{x} \in \hat{E}$.\vspace{-2mm}
			\item[(iii)] We use (i) and (ii) to prove \eqref{eq:claim_BR_usc}.\vspace{-2mm}
		\end{itemize}
		
		(i): Since the states $\hat{x} \in \hat E_{S \cup K}$ are absorbing and the rewards for these states are independent of $p_n^{(2)}$ and $p^{(2)}$, the claim holds for $\hat{x} \in \hat E_{S \cup K}$. 
		Hence, we now consider the states $(x,C) = \hat{x} \notin \hat{E}_{S \cup K}$. Let us write $\tilde{\mathcal{T}}_1$ for the set of stopping times with respect to the filtration $\sigma(X_0, \ldots, X_n)$, $n \in \mathbb{N}_0$. 
		In Proposition~\ref{thm:characterization_optimal_strategies} we proved that for any $p^{(2)} \in [0,1]^E$ at least one strategy from $\tilde{\mathcal{T}}_1$ is optimal for \eqref{sec3:optstop2}, and hence
		\[
		\hat V^{p^{(2)}}(\hat{x}) = \sup_{\tau \in \tilde{\mathcal{T}}_1} J_\tau^{p^{(2)}}(\hat{x}). 
		\]
		Using the equality \eqref{eq:RelationRewardGameStopping} we find, for all $m \in \mathbb{N}_0$,
		\begin{align*}
			&\mathbb{E}_{(x,C)} \left[ \hat{r}^{p^{(2)}}(\hat{X}^{p^{(2)}}_m) | X_0, \ldots, X_m \right] \\
			&= \sum_{i=1}^m \left(\prod_{j=0}^{i-2} (1-p^{(2)}(X_j)) \right) p^{(2)}(X_{i-1}) \alpha^{i-1} g_1(X_{i-1})+ \left(\prod_{j=0}^m (1-p^{(2)}(X_j))\right) \alpha^m f_1(X_m)  \\
			&\quad+ \left(\prod_{j=0}^{m-1} (1-p^{(2)}(X_j)) \right) p^{(2)}(X_m) \alpha^m h_1(X_m).
		\end{align*} 
		Hence, for any stopping time $\tau \in \tilde{\mathcal{T}}_1$ with $\tau \le m$ we find measurable functions 
		$G_i: E^{i+1} \rightarrow \mathbb{R}$ ($i=0, \ldots, m$) 
		and $H: E^{m+1} \rightarrow \mathbb{R}$ 
		with 
		\begin{align*}
			G_i(X_0, \ldots, X_i) \le M,  \quad i = 0, \ldots, m, \quad \text{and} \quad  H(X_0, \ldots, X_m) \le M,
		\end{align*}
		such that 
		\begin{align*}
			&\mathbb{E}_{(x,C)}[\hat{r}^{p^{(2)}} (\hat{X}^{p^{(2)}}_\tau) | X_0, \ldots, X_m] \\
			&= \sum_{i=0}^m \prod_{j=0}^{i-1} (1-p^{(2)}(X_j)) p^{(2)}(X_i) G_i (X_0, \ldots, X_i) + \prod_{j=0}^m (1-p^{(2)}(X_j)) H(X_0, \ldots, X_m).
		\end{align*}
		(Recall that $M$ is defined in connection to Assumption~\ref{assum:bounded-functions}).
		Taking expectations we find
		\begin{align*}
			&\left| \mathbb{E}_{(x,C)} \left[ \hat{r}^{p^{(2)}}(\hat{X}_\tau^{p^{(2)}}) \right] - \mathbb{E}_{(x,C)} \left[\hat{r}^{p_n^{(2)}} (\hat{X}^{p_n^{(2)}}_\tau) \right] \right| \\
			&= \left| \mathbb{E}_{(x,C)} \left[ \sum_{i=0}^m \left( \prod_{j=0}^{i-1} (1-p^{(2)}(X_j)) p^{(2)}(X_i) - \prod_{j=0}^{i-1} (1-p_n^{(2)}(X_j)) p_n^{(2)}(X_i)  \right) G_i(X_0, \ldots, X_i) \right. \right. \\
			&\quad +\left. \left. \left( \prod_{j=0}^m (1-p^{(2)}(X_j)) - \prod_{j=0}^m (1-p_n^{(2)}(X_j)) \right) H(X_0, \ldots, X_m) \right] \right|.
		\end{align*}
		It follows that 
		\begin{align*}
			&\left| \sup_{\tau \le m} J_\tau^{p^{(2)}} (\hat{x}) - \sup_{\tau \le m} J_\tau^{p^{(2)}_n}  (\hat{x})\right| 
			\le \sup_{\tau \le m} \left| J_\tau^{p^{(2)}} (\hat{x}) -  J_\tau^{p^{(2)}_n}  (\hat{x})\right| \le  \\
			&\sup_{G_i, H \le M} \left| \mathbb{E}_{(x,C)} \left[ \sum_{i=0}^m \left( \prod_{j=0}^{i-1} (1-p^{(2)}(X_j)) p^{(2)}(X_i) - \prod_{j=0}^{i-1} (1-p_n^{(2)}(X_j)) p_n^{(2)}(X_i)  \right) G_i(X_0, \ldots, X_i) \right. \right. \\
			&\quad +\left. \left. \left( \prod_{j=0}^m (1-p^{(2)}(X_j)) - \prod_{j=0}^m (1-p_n^{(2)}(X_j)) \right) H(X_0, \ldots, X_m) \right] \right|.
		\end{align*}
		The random variable inside the last expectation is dominated by $2(m+1)M$ and converges pointwise to $0$, since $p_n^{(2)} \rightarrow p^{(2)}$ (by assumption). 
		Relying on dominated convergence we thus obtain
		\[
		\sup_{\tau \le m} J_\tau^{p_n^{(2)}} (\hat{x}) \rightarrow \sup_{\tau \le m} J_\tau^{p^{(2)}} (\hat{x}).
		\]
		Since the reward associated to stopping at a time later than $m$ is bounded by $\alpha^m M$ we obtain using dominated convergence that 
		\[
		\sup_{\tau \le m} J_{\tau}^{p^{(2)}}(\hat{x}) \overset{m \rightarrow \infty}{\rightarrow} \sup_{\tau \in \tilde{\mathcal{T}}_1} J_\tau^{p^{(2)}}(\hat{x}) = \hat{V}^{p^{(2)}}(\hat{x})
		\]
		and similarly for $p^{(2)}_n$. We conclude that the desired claim holds, i.e., 
		\[
		\hat{V}^{p_n^{(2)}}(\hat{x})=\sup_{\tau \in \tilde{\mathcal{T}}_1} J_\tau^{p^{(2)}_n} (\hat{x}) \rightarrow  \hat{V}^{p^{(2)}}(\hat{x}).
		\]
		(ii): Since the states $\hat{x} \in \hat{E}_{S \cup K}$ are absorbing, the claim holds (similarly to the above) for $\hat{x} \in \hat{E}_{S \cup K}$. 
		Hence, we now consider the states $(x,C) = \hat{x} \notin \hat{E}_{S \cup K}$. The claim then follows, by dominated convergence, according to 
		\begin{align*}
			&\hat{\Pi}^{p_n^{(2)}} V^{p_n^{(2)}}(x,C) \\
			&=  (1-p_n^{(2)}(x)) \int_E \alpha V^{p_n^{(2)}}(y)  \Pi(x, \text{d} y) +  {p_n^{(2)}}(x) \hat{V}^{{p_n^{(2)}}}(x,S) + (1-\alpha) (1-p_n^{(2)}(x)) \cdot 0 \\
			&\rightarrow (1-p^{(2)}(x)) \int_E \alpha  V^{p^{(2)}}(y) \Pi(x, \text{d} y) +  {p^{(2)}}(x) \hat{V}^{p^{(2)}}(x,S) + (1-\alpha) (1-p^{(2)}(x)) \cdot 0 \\
			&= \hat{\Pi}^{p^{(2)}} V^{p^{(2)}}(x,C).
		\end{align*}
		(iii): 
		In order to show that \eqref{eq:claim_BR_usc} holds, it suffices to show, by Proposition~\ref{thm:characterization_optimal_strategies}, that for any $x \in E$, it holds that
		\begin{itemize}
			\item[(a)] $p^{(1)}(x) = 0 \Rightarrow (x,C) \notin \hat D$\vspace{-2mm}
			\item[(b)] $p^{(1)}(x) \in (0,1) \Rightarrow (x,C) \in \hat I$\vspace{-2mm}
			\item[(c)] $p^{(1)}(x) =1 \Rightarrow (x,C) \in \hat I \cup \hat D$.
		\end{itemize}
		Recall that for $\hat{x}=(x,C)$ and $p_n^{(2)} \rightarrow p^{(2)}$ we have
		\begin{equation}
			\label{eq:usc_prelim}
			\hat{V}^{p_n^{(2)}}(\hat{x}) \rightarrow \hat{V}^{p^{(2)}}(\hat{x}), \quad \hat{\Pi}^{p_n^{(2)}} \hat{V}^{p_n^{(2)}}(\hat{x})  \rightarrow \hat{\Pi}^{p^{(2)}} \hat{V}^{p^{(2)}}(\hat{x}) \quad \text{and} \quad \hat{r}^{p^{(2)}_n}(\hat{x}) \rightarrow \hat{r}^{p^{(2)}}(\hat{x}).
		\end{equation}
		Below we show that (a)--(c) hold.  We use the definitions of $\hat D$ and $\hat I$ in \eqref{eq:indiff-cont-sets}  repeatedly. 
		
		(a): Suppose $p^{(1)}(x) = 0$. It suffices to show that $\hat{\Pi}^{p^{(2)}}\hat{V}^{p^{(2)}}(x,C) \ge \hat{r}^{p^{(2)}}(x,C)$. Since $p_n^{(1)}(x) \rightarrow p^{(1)}(x)$ we have that there is an $n_0 \in \mathbb{N}$ such that for all $n \ge n_0$ we have $p_n^{(1)}(x) < \frac{1}{2}$, which means, by Proposition~\ref{thm:characterization_optimal_strategies} that 
		$\hat{\Pi}^{p_n^{(2)}}\hat{V}^{p_n^{(2)}}(x,C) \ge \hat{r}^{p_n^{(2)}}(x,C)$ for all $n \ge n_0$, and using \eqref{eq:usc_prelim} we obtain the desired statement.
		
		(b): Suppose $p^{(1)}(x) \in (0,1)$. It suffices to show that $\hat{\Pi}^{p^{(2)}}\hat{V}^{p^{(2)}}(x,C) = \hat{r}^{p^{(2)}}(x,C)$. Since $p_n^{(1)}(x) \rightarrow p^{(1)}(x)$ there is an $n_0 \in \mathbb{N}$ such that $p_n^{(1)}(x) \in (0,1)$ for all $n \ge n_0$. Hence, by Proposition~\ref{thm:characterization_optimal_strategies}, we have 
		$\hat{\Pi}^{p_n^{(2)}}\hat{V}^{p_n^{(2)}}(x,C) = \hat{r}^{p_n^{(2)}}(x,C)$ for all $n \ge n_0$, and using \eqref{eq:usc_prelim}, we obtain the desired statement.
		
		(c): Analogous to  (a).
	\end{proof} 
	
	\begin{example}\label{ex:no-NE}
		Let us illustrate that our integrability condition (Assumption~\ref{assum:bounded-functions}) is necessary for the existence of Nash equilibria in that it does not suffice to require \eqref{eq:weak_integrability_condition}. Namely, let us consider a deterministic zero-sum game and set $X_n=n$, $f(n)=g(n)=2^n$, $h(n)=0$ and $\alpha=1/2$. With these choices we recover the example in \cite{Shmaya_Deterministic_Epsilon_Existence}, for which no Nash equilibrium exists (although an $\epsilon$-equilibrium, for every $\epsilon>0$, does exist). We immediately observe that $\mathbb{E}[\sup_{n \in \mathbb{N}_0} \alpha^n f(X_n)]< \infty$. However, $\mathbb{E}[\sup_{n \in \mathbb{N}_0} f(\tilde{X}_n)] = \infty$. This shows how our integrability condition is essential. \defEnd
	\end{example}

	\begin{remark}
		It is well-known that we can interpret any general stochastic process $(X_n)_{n\in\mathbb N_0}$ on a countable space $E$ as a Markov chain on the space of all sample paths $\tilde E:=\{(x_k)_{
			k\leq n}:\,x_k\in E,\,n\in\mathbb N\}$. In this case, our notion of Markovian randomized stopping times coincides with the notion of general behavior stopping times, see \cite[Section 3.2]{solan2012random}. In practical applications, as always with the use of general randomized stopping times, the problem arises that the stopping rules are path-dependent and thus difficult to handle.   Nevertheless, Theorem~\ref{thm:Existence} is applicable and we obtain that in this case a Nash equilibrium (and not only an $\epsilon$-equilibrium) always exists. \defEnd
	\end{remark}

	\subsection{Symmetric Equilibrium Existence for Symmetric Games}\label{sec:countable-symNE}

	For symmetric games we immediately find that the one-player best response mappings $\text{BR}^{(i)},i=1,2$ are identical. 
	Using this observation and arguments analogous to those in the proof of Theorem~\ref{thm:Existence} one can  prove that $\text{BR}^{(1)}$ has a fixed point $p^{(1)}$, which in turn is such that $\left(p^{(1)},p^{(2)}\right)$ with $p^{(1)}=p^{(2)}$ is a global Markovian randomized equilibrium. Indeed we have the following result: 
	
	\begin{theorem}\label{thm:countable-symNE} Let the state space $E$ be countable and consider a symmetric game (cf. \eqref{eq:symmetric-game-def}). 
		Then, a global Markovian randomized equilibrium 
		$\left(p^{(1)},p^{(2)}\right)$ with $p^{(1)}=p^{(2)}$ exists.
	\end{theorem}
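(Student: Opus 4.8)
The plan is to apply Kakutani's fixed point theorem directly to the \emph{one-player} best response mapping $\text{BR}^{(1)} : [0,1]^E \to \mathcal{P}([0,1]^E)$, exploiting that in a symmetric game the two players face literally the same optimization problem. First I would record that symmetry (cf. \eqref{eq:symmetric-game-def}) implies $\text{BR}^{(1)} = \text{BR}^{(2)}$: when the opponent employs a Markovian strategy $p$, the optimal stopping problem \eqref{sec3:optstop1} faced by player $i$ depends only on $p$ and on the triple $(f_i,g_i,h_i)=(f,g,h)$, which is independent of $i$; hence the set of optimizers, and after intersecting over all initial states $x\in E$ the set $\text{BR}^{(i)}(p)$, does not depend on $i$.

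Next I would set up the fixed point argument exactly as in the proof of Theorem~\ref{thm:Existence}. Equip $[0,1]^E$ with the product topology; this space is convex, compact by Tychonoff's theorem, locally convex (\cite[\S18.3]{KoetheTopologoicalVectorSpaces}) and, since $E$ is countable, metrizable. By Proposition~\ref{thm:CharacterizationValues} the values $\text{BR}^{(1)}(p)$ are non-empty, convex and closed. The upper semicontinuity of $\text{BR}^{(1)}$ was established in the proof of Theorem~\ref{thm:Existence}, in the three-step verification of the sequential criterion \eqref{eq:claim_BR_usc}; that argument only uses the integrability bound $M\in L^1$ and the Markov kernel \eqref{markov-kernel}, not any zero-sum or asymmetric structure, so it applies verbatim here. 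Thus the hypotheses of the version of Kakutani's theorem recalled in Remark~\ref{rem:existPF} are met, and $\text{BR}^{(1)}$ admits a fixed point $p^{(1)} =: p \in \text{BR}^{(1)}(p)$.

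Finally I would set $p^{(2)} := p$ and conclude. Since $\text{BR}^{(2)} = \text{BR}^{(1)}$ we have $p^{(2)} = p \in \text{BR}^{(2)}(p)=\text{BR}^{(2)}(p^{(1)})$ and likewise $p^{(1)} = p \in \text{BR}^{(1)}(p^{(2)})$, so $(p^{(1)},p^{(2)})$ is a fixed point of the two-player best response mapping $F$. By the discussion preceding Remark~\ref{rem:existPF} it is therefore a global Markovian randomized equilibrium, and by construction $p^{(1)}=p^{(2)}$.

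I expect the only genuinely substantive point to be the (easy) identity $\text{BR}^{(1)} = \text{BR}^{(2)}$ for symmetric games; everything else is a re-packaging of the machinery already built for Theorem~\ref{thm:Existence}, where the real work — the upper semicontinuity estimate via dominated convergence — has already been carried out. The one thing worth double-checking while writing is that no step of that estimate tacitly relied on the specific payoff structure used elsewhere in the paper, but inspection confirms it does not.
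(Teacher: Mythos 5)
Your proposal is correct and follows exactly the route the paper itself sketches: observe that symmetry gives $\text{BR}^{(1)}=\text{BR}^{(2)}$, apply the Kakutani argument from the proof of Theorem~\ref{thm:Existence} to the single mapping $\text{BR}^{(1)}$ on $[0,1]^E$ to obtain a fixed point $p$, and conclude that $(p,p)$ is a fixed point of $F$ and hence a symmetric global Markovian randomized equilibrium. Your write-up is in fact more detailed than the paper's, which leaves all of these steps as ``analogous arguments.''
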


	\bibliographystyle{plain}
	\bibliography{MarkovStrategiesDiscreteTimeLiteratureK}
	
\end{document}